\newtheorem{theorem}{Theorem}[section]
\newtheorem{lemma}[theorem]{Lemma}
\newtheorem{proposition}[theorem]{Proposition}
\newtheorem{corollary}[theorem]{Corollary}
\newtheorem{remark}{Remark}[section]
\newtheorem{example}[theorem]{Example}
\def\R{\mathbb{R}}
\def\N{\mathbb{N}}
\def\Z{\mathbb{Z}}
\def\Q{\mathbb{Q}}
\begin{document}

\title[Non-Archimedean Koksma Theorems and Dimensions of Exceptional Sets]{Non-Archimedean Koksma Theorems and Dimensions of Exceptional Sets
}

\date{} 

\author[A. H. FAN]{Aihua Fan}
\address[A. F. FAN]
{LAMFA, UMR 7352, University of Picardie, 33 Rue Saint Leu, 80039, Amiens, France and  Wuhan Institute for Math \& AI, Wuhan University, Wuhan 430072, China}
\email{ai-hua.fan@u-picardie.fr}

\author[S. L. FAN]{Shilei Fan}
\address[S. L. FAN]
{School of Mathematics and Statistics, and Key Lab NAA--MOE, Central China Normal University, Wuhan 430079, China}
\email{slfan@mail.ccnu.edu.cn}

\author[H. F. YE]{Hanfei Ye}
\address[H. F. YE]
{School of Mathematics and Statistics \& Hubei Key Laboratory of Mathematical Sciences, Central  China Normal University, Wuhan, 430079, China }
\email{yehanfei15@mails.ucas.ac.cn}

\thanks{ The authors were supported by NSFC (grants No. 12331004  and No.  12231013) and NSF of Xinjiang Uygur Autonomous Region (Grant No. 2024D01A160).}

\begin{abstract}
We establish a non-Archimedean analogue of Koksma's theorem.
For a local field $\mathcal{F}$ of characteristic zero, we prove that the sequence $([\alpha x^n])$ is uniformly distributed in the valuation ring $\mathcal{O}$ for almost every $x$ with $|x|_\mathfrak{p}>1$.
In the case of positive characteristic, $([x^n])$ fails to be uniformly distributed, but it becomes $\mu^*$-uniformly distributed for some weighted measure $\mu^*$.
These results are derived from a general metric theorem for sequences generated by expanding scaling maps.
On the other hand,  we demonstrate that the exceptional set of parameters $x$ for which these sequences are not uniformly distributed is large (i.e. having full Hausdorff dimension) and share a rich $q$-homogeneous fractal structure.

\end{abstract}
\subjclass[2010]{}
\keywords{Uniform Distribution, Non-Archimedean Fields, Koksma's Theorem, Hausdorff Dimension.}
\maketitle

\section{Introduction}
A sequence of real numbers $(x_n)$ is said to be equi-distributed or uniformly distributed modulo 1 (u.d. mod 1 for short), if the sequence of fractional parts $(\{x_n\})$ is uniformly distributed in $\mathbb{T}=\mathbb{R}/\mathbb{Z}$, which means that for every pair $a, b$ of real numbers with $0 \leq a < b \leq 1$ we have
\[\lim_{N \rightarrow \infty}\frac{1}{N}\#\{1\leq n\leq N: x_n \in [a,b)\}=b-a.\]
In 1916, Weyl laid a criterion for determining whether a sequence is u.d. mod 1 or not \cite{Weyl1916}. Weyl's theorem states that the following are equivalent:
\begin{itemize}
\item[(1)] a sequence $(x_n)_{n\geq 1} \subset \mathbb{T} $ is uniformly distributed;
\item[(2)] for every continuous function $f\in C(\mathbb{T})$ we have 
\[\lim_{N\to \infty }\frac{1}{N} \sum_{n=1}^{N}f(x_n)=\int_\mathbb{T}f(x)dx;\]
\item[(3)] for $f$ belonging to a total set of continuous functions (i.e. their linear combinations are dense in $C(\mathbb{T})$), we have the same conclusion of (2);
\item[(4)]  for each $\ell \in \Z\setminus\{0\} $,  we have 
\[\lim_{N\to \infty }\frac{1}{N} \sum_{n=1}^{N}e^{2\pi i \ell x_n}=0.\]
\end{itemize}

In general, let $X$ be a compact Hausdorff space and $\mu$ be a Borel probability measure on $X$. A sequence of points $(x_n)\subset X$ is said to be $\mu$-uniformly distributed ($\mu$-u.d. for short)
if the probability measures $\frac{1}{N}\sum_{n=1}^{N} \delta_{x_n}$ converge to $\mu$ in weak$^*$-topology as $N\to \infty$, where $\delta_a$ denotes the Dirac measure at the point $a$.
If $X$ is a compact Abelian group and $\mu$ is the Haar measure, Weyl's theorem remains true.
We refer \cite{KN1974} for such generalization. 
\medskip

One of the basic results concerning the u.d. mod 1 for sequences in $\mathbb{R}$ is Koksma's equi-distribution theorem \cite{Kok1935}, which states that for Lebesgue-almost every $\alpha> 1$, the geometric progression $(\alpha^n)_{n\geq 1}$ is uniformly distributed mod $1$.
Such sequences with a “typical” value of $\alpha$ was proposed by Knuth \cite{Knuth1998} as sequences showing strong pseudo-randomness properties.
\medskip

In the present paper we will prove a non-Archimedean version of Koksma's theorem.
We are concerned with the integral parts of the powers $x^n$ ($x$ is in a local field $\mathcal{F}$ with $|x|_\mathfrak{p}>1$) which fall into the valuation ring $\mathcal{O}$ of $\mathcal{F}$, which is considered as an additive group.
Bertrandias \cite{Bert1967} considered a $p$-adic version of Koksma's theorem, concerning the fractional parts of $x^n$,  which fall into the unit interval $[0,1)$ in $\mathbb{R}$. 
\medskip

Let us first recall some notations before  presenting our setting and main results.
For a non-Archimedean local field $\mathcal{F}$, denote its valuation ring by $\mathcal{O}$ and its maximal ideal by $\mathfrak{p}$.
Fix a prime element $\pi$ and choose a complete set 
$C \subset \mathcal{O}$ of representatives of $\mathcal{O}/\mathfrak{p}$ with $0 \in C$.
A typical element \(x \in \mathcal{F}\) can be written in the form
\begin{equation}\label{eq:pnumber}
    x = \sum_{n = v}^{\infty} c_n \pi^{\,n},
\end{equation}
where \(v \in \mathbb{Z}\) and the coefficients \(c_n \in C\) satisfy \(c_v \neq 0\).

This series converges in non-Archimedean norm.
The non-Archimedean norm of $x$ is defined by $|x|_\mathfrak{p}:=q^{-v}$, where $q=\#\mathcal{O}/\mathfrak{p}$.
Let \[\{x\}:=\sum_{n=v}^{-1}c_n \pi^{n},\] called the fraction part of $x$, and let   \[[x]:=\sum_{n = 0}^{\infty}c_n \pi^{n},\] called the integral part of $x$.
\medskip

In this setting of non-Archimedean local field $\mathcal{F}$,  Weyl's criterion states as follows. A sequence $(x_n)\subset \mathcal{O}$ is uniformly distributed in $\mathcal{O}$ if for every $a$ in $\mathcal{O}$ and every $k \in \N$, we have 
\[\lim _{N\to \infty}\frac{1}{N}\#\{1\leq n \leq N: x_n \in  D(a, 1/q^k)\}=1/q^k,\]
where $D(a, 1/q^k)=\{x\in \mathcal{F}: |x-a|_\mathfrak{p}\leq 1/q^k\}$ is the disk of radius $1/q^k$ centered at $a$.
\medskip

Our first result is the following non-Archimedean version of Koksma's equi-distribution theorem. 
\begin{theorem}\label{thm-koksma}
Fix $\alpha \in \mathcal{F}$ with $\alpha \neq 0$.
\begin{itemize}
\item[{\rm(a)}] If ${\rm char}\mathcal{F}=0$, then the sequence $\bigl([\alpha x^n]\bigr)_{n\geq 1}$ is uniformly distributed in $\mathcal{O}$ for almost all $x\in \mathcal{F}$ with $|x|_\mathfrak{p}>1$;
\item [{\rm(b)}] If ${\rm char}\mathcal{F}=p>0$, then the sequence $\bigl([\alpha x^n]\bigr)_{n\geq 1,p\nmid n}$ {\rm(}the subsequence of $\bigl([\alpha x^n]\bigr)$ along $n$'s which have no factor $p${\rm)} is uniformly distributed in $\mathcal{O}$ for almost all $x\in \mathcal{F}$ with $|x|_\mathfrak{p}>1$.
\end{itemize}
\end{theorem}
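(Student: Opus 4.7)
The plan is to adapt the classical second-moment proof of Koksma's theorem to the non-Archimedean setting: reduce to decay of character sums, estimate a quadratic mean by expanding it into cross-integrals, bound those cross-integrals by a non-Archimedean oscillatory-integral argument, and conclude via Borel--Cantelli.

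\textbf{Reduction via Weyl's criterion.} I would invoke the non-Archimedean Weyl criterion for the compact additive group $\mathcal{O}$. Fix a non-trivial additive character $\psi\colon\mathcal{F}\to\C^{*}$ that is trivial on $\mathcal{O}$ and non-trivial on $\pi^{-1}\mathcal{O}$. The continuous characters of $\mathcal{O}$ are then parametrized by $\beta\in\bigcup_{k\geq 1}\pi^{-k}\mathcal{O}$ via $\chi_\beta(y)=\psi(\beta y)$. It is enough to prove that for every non-trivial $\beta$,
\[
\frac{1}{N}\sum_{n=1}^{N}\psi\!\bigl(\beta\,[\alpha x^{n}]\bigr)\;\longrightarrow\;0\quad\text{for a.e.\ } x\text{ with }|x|_{\mathfrak{p}}>1,
\]
which I would establish separately on each annulus $A_{k}=\{x:|x|_{\mathfrak{p}}=q^{k}\}$, $k\geq 1$, each of finite Haar measure.

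\textbf{Variance estimate and the oscillatory-integral core.} For the second moment I would expand
\[
\int_{A_{k}}\Big|\sum_{n=1}^{N}\psi(\beta[\alpha x^{n}])\Big|^{2}dx\;=\;\sum_{n,m=1}^{N}\int_{A_{k}}\psi\!\bigl(\beta([\alpha x^{n}]-[\alpha x^{m}])\bigr)\,dx,
\]
and write $[\alpha x^{n}]-[\alpha x^{m}]=\alpha(x^{n}-x^{m})-(\{\alpha x^{n}\}-\{\alpha x^{m}\})$. On every sufficiently small subdisk $D\subset A_{k}$ both fractional parts are constant, so after partitioning $A_{k}$ into such disks each cross-term ($n\neq m$) reduces to a weighted sum of oscillatory integrals of the form $\int_{D}\psi(\beta\alpha(x^{n}-x^{m}))\,dx$. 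A non-Archimedean stationary-phase / change-of-variable argument, based on the formal derivative $nx^{n-1}-mx^{m-1}$ of $x^{n}-x^{m}$ and the Taylor expansion $(x_{0}+h)^{n}=x_{0}^{n}+nx_{0}^{n-1}h+\cdots$, bounds each such integral by $q^{-c\min(n,m)}$ for some $c=c(k,\beta)>0$. In characteristic zero this works for all $n\neq m$; in positive characteristic it works exactly when $p\nmid n$ and $p\nmid m$, because when $p\mid n$ the map $x\mapsto x^{n}$ factors through Frobenius, its formal derivative vanishes, and no linear-in-$h$ term survives in the expansion $(x_{0}+h)^{n}$. This oscillatory-integral step is the content that the general ``expanding scaling maps'' theorem announced in the introduction is designed to package.

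\textbf{Borel--Cantelli and conclusion.} The diagonal contributes $O(N)\cdot\mathrm{vol}(A_{k})$ and the off-diagonal bound from Step~2 is summable in $(n,m)$, giving
\[
\int_{A_{k}}\Big|\sum_{n=1}^{N}\psi(\beta[\alpha x^{n}])\Big|^{2}\,dx\;=\;o(N^{2}).
\]
Chebyshev's inequality along a sufficiently fast subsequence $N_{j}=\lfloor j^{1+\epsilon}\rfloor$, together with Borel--Cantelli and a standard interpolation controlling the fluctuations between $N_{j}$ and $N_{j+1}$, yields almost-sure vanishing of the normalized character sum on $A_{k}$. A countable union over $k\geq 1$ and over the countable family of non-trivial characters $\chi_{\beta}$ proves part~(a). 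For part~(b), the identical argument applies to the restricted sum over $\{n:p\nmid n\}$, because the Frobenius obstruction is avoided exactly by this restriction.

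\textbf{Main obstacle.} The technical heart is the oscillatory-integral estimate. The fractional-part phase $\psi(\beta\{\alpha x^{n}\})$ is only locally constant on scales that shrink with $n$, while the change-of-variable cancellation in $\int_{D}\psi(\beta\alpha(x^{n}-x^{m}))\,dx$ improves with the size of $D$. Matching these two scales to obtain decay in $\min(n,m)$ uniform enough in $\beta$ and $k$ to carry out the countable-union step is the delicate bookkeeping that the general expanding-scaling-maps theorem is meant to handle once and for all.
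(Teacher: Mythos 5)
Your proposal follows the classical Koksma template — Weyl's criterion with additive characters, a variance estimate expanded over cross-integrals, an oscillatory-integral bound, and Borel--Cantelli along a subsequence — while the paper takes a cleaner, non-Fourier route that exploits the rigidity of the non-Archimedean setting: Lemma \ref{lem-1} shows that $\tilde{f}=[\,\cdot\,]\circ f$ preserves Haar measure for any scaling map $f$, Lemma \ref{lem-2} gives \emph{exact} orthogonality $\mathbb{E}\bigl(1_D\circ\tilde{f}\cdot 1_D\circ\tilde{g}\bigr)=\mu(D)^2$ whenever the scaling ratios of $f,g$ differ by at least the level of the disk $D$, and Davenport--Erd\"os--LeVeque (Lemma \ref{thm-DEL}) packages the Borel--Cantelli-along-a-subsequence step you would do by hand. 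These two approaches are morally parallel (your cross-integrals are what Lemma \ref{lem-2} computes, and DEL is your Chebyshev-plus-interpolation argument), but the paper's version trades in indicator functions of disks rather than characters, which replaces your approximate decay by an exact vanishing of covariances and avoids the stationary-phase bookkeeping entirely.

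There is, however, a genuine gap in the core estimate as you state it. You claim that $\int_D \psi\bigl(\beta\alpha(x^n-x^m)\bigr)\,dx$ is bounded by $q^{-c\min(n,m)}$ and that ``in characteristic zero this works for all $n\neq m$,'' the only obstruction you identify being the Frobenius degeneracy in positive characteristic. This is not correct. The derivative $nx^{n-1}-mx^{m-1}$ that drives your change-of-variable argument can suffer cancellation even when $n\neq m$: with $|x|_\mathfrak{p}=q^\gamma>1$ the two terms have $|n|_\mathfrak{p}|x|_\mathfrak{p}^{n-1}=|m|_\mathfrak{p}|x|_\mathfrak{p}^{m-1}$, i.e.\ the scaling ratios coincide, exactly when $v_\mathfrak{p}(n)-v_\mathfrak{p}(m)=(n-m)\gamma$, and for such \emph{resonant} pairs the leading terms of the derivative may cancel, destroying the oscillation. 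The paper handles this via Lemma \ref{lem-logN}, which bounds the number of resonant indices $m$ sharing a scaling exponent with a given $n$ by $O(\log N)$, so the total set $K_N$ of resonant pairs below $N$ has size $O(N\log N)$; this is precisely what condition (b) of Theorem \ref{thm-main} ($\sum \#K_N/N^3<\infty$) is designed to absorb. Your variance estimate would still close if you bound the resonant pairs trivially and count them, since $N\log N=o(N^2)$, but without explicitly carrying out that count the claimed bound for ``all $n\neq m$'' is unjustified and the argument does not go through as written.
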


The conclusion of Theorem \ref{thm-koksma} (b) does not hold in general for the whole sequence $\bigl([\alpha x^n]\bigr)$ (see  Theorem  \ref{thm-char=p}).
Theorem \ref{thm-koksma} will be proved as a consequence of the following general theorem about scaling maps.
Let $\Omega$ be a disk in $\mathcal{F}$. A map $f: \Omega \to \mathcal{F}$ is said to be  \textit{scaling} of \textit{scaling ratio} $q^\lambda$ for some $\lambda \in \mathbb{Z}$ if 
\[|f(x)-f(y)|_\mathfrak{p}= q^\lambda |x-y|_\mathfrak{p}, \quad  \forall  x, y \in \Omega.\]
If $\lambda \ge 1$, we say that $f$ is an \textit{expanding scaling map}. A map $f: U \to \mathcal{F}$ defined on an open set $U$ is said to be \textit{locally scaling} if for any $x\in U$, $f$ is scaling in some neighborhood of $x$.

\begin{theorem}\label{thm-main}
Let $\Omega\subset \mathcal{F}$ be a disk in $\mathcal{F}$ and let $(f_n)$ be a sequence of scaling maps from $\Omega$ into $\mathcal{F}$ of scaling ratios $(q^{\lambda_n})$. Suppose
\begin{itemize} 
\item[(a)] $q^{\lambda_n} \geq {\rm diam}(\Omega)^{-1}$ for all $n\ge 1$;
\item[(b)] $\sum_{N=1}^{+\infty} \frac{\# K_{N}}{N^{3}} <\infty$ where
$$
K_N=\{(n, m): 1\le n, m\le N ,\; \lambda_n=\lambda_m\}.
$$
\end{itemize}
Then $([f_{n}(x)])_{n\geq1}$ is uniformly distributed in $\mathcal{O}$ for almost all $x\in \Omega$.
\end{theorem}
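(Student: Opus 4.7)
The strategy is a second-moment argument combined with the Davenport--Erd\H{o}s--LeVeque theorem. By the non-Archimedean Weyl criterion for the compact group $\mathcal{O}$, it suffices to prove, for every nontrivial character $\chi$ of the additive group $\mathcal{O}$, that
\[
\frac{1}{N}\sum_{n=1}^N \chi\bigl([f_n(x)]\bigr) \longrightarrow 0 \quad \text{for $m$-a.e.~} x\in\Omega,
\]
where $m$ denotes the normalized Haar measure on $\Omega$. Every such character is trivial on some $\mathfrak{p}^k$ with $k\ge 1$, and the dual group of $\mathcal{O}$ is countable, so I will fix a single $\chi$ of ``level'' $k$ and, at the end, intersect over countably many full-measure sets.

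The heart of the argument is the following orthogonality lemma: whenever $|\lambda_n-\lambda_m|\ge k$,
\[
I_{n,m}:=\int_\Omega \chi\bigl([f_n(x)]\bigr)\,\overline{\chi\bigl([f_m(x)]\bigr)}\,dm(x) = 0.
\]
To prove this, I assume without loss of generality that $\lambda_n\ge \lambda_m+k$ and partition $\Omega$ into sub-disks $B$ of radius $q^{-\lambda_m-k}$ (permissible by hypothesis (a)). On each such $B$ the image $f_m(B)$ is a disk of radius $q^{-k}$, hence lies in a single coset of $\mathfrak{p}^k$, so $\chi([f_m(x)])$ is constant on $B$. Simultaneously $f_n(B)$ is a disk of radius $q^{\lambda_n-\lambda_m-k}\ge 1$, i.e.\ a disjoint union of full $\mathcal{O}$-cosets. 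The crucial calculation is $\int_{a+\mathcal{O}}\chi([y])\,dy=0$ for any coset $a+\mathcal{O}\subset\mathcal{F}$: taking $a$ to be the canonical fractional-part representative gives $[a+z]=z$ for $z\in\mathcal{O}$, after which $\int_{\mathcal{O}}\chi(z)\,dz=0$ since $\chi$ is a nontrivial character. Pulling back by the scaling bijection $f_n|_B\colon B\to f_n(B)$ (whose pushforward of $m|_B$ is proportional to Haar on $f_n(B)$) yields $\int_B \chi([f_n(x)])\,dm=0$, so the contribution of $B$ to $I_{n,m}$ vanishes; summing over $B$ gives $I_{n,m}=0$.

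Now set $S_N(x)=\sum_{n=1}^N\chi([f_n(x)])$. By the orthogonality lemma only pairs $(n,m)$ with $|\lambda_n-\lambda_m|<k$ contribute to $\int_\Omega|S_N|^2\,dm$, each with modulus at most $1$. For each fixed $i$ with $|i|<k$, Cauchy--Schwarz applied to the level-set counts $N_j:=\#\{n\le N:\lambda_n=j\}$ gives $\sum_j N_j N_{j+i}\le\sum_j N_j^2=\#K_N$, whence
\[
\int_\Omega |S_N|^2\,dm \;\le\; (2k-1)\,\#K_N.
\]
Hypothesis (b) then yields $\sum_{N\ge 1} N^{-3}\int_\Omega|S_N|^2\,dm<\infty$, and the Davenport--Erd\H{o}s--LeVeque theorem supplies $S_N/N\to 0$ $m$-almost everywhere on $\Omega$. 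Intersecting over the countably many nontrivial characters and re-invoking Weyl's criterion finishes the proof.

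The principal obstacle is the orthogonality lemma, which relies on two ultrametric-specific facts that I need to handle carefully: (i) a scaling map of ratio $q^\lambda$ from a closed disk is surjective onto the disk of scaled radius (so that ``$f_n(B)$ contains full $\mathcal{O}$-cosets'' is meaningful and the pushforward of Haar is Haar), and (ii) the integer-part map is compatible with the decomposition $\mathcal{F}=\bigsqcup_{a\in\mathcal{F}/\mathcal{O}}(a+\mathcal{O})$ via canonical fractional representatives, so that $[a+z]=z$ holds with no carries. Once these are in place, the variance estimate and the appeal to DEL are essentially formal.
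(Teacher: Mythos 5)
Your proof is correct and reaches the same second-moment plus Davenport--Erd\H{o}s--LeVeque endgame as the paper, but it tests uniform distribution with characters of $\mathcal{O}$ rather than with indicator functions of disks, which makes the correlation step structurally different. The paper works with $Y_n = 1_D\circ\tilde f_n - \mu(D)$ and needs two separate lemmas: an invariance lemma (Lemma~\ref{lem-1}) showing $\mu\circ\tilde f^{-1}=\mu$ so that $\mathbb{E}Y_n=0$, and a decorrelation lemma (Lemma~\ref{lem-2}) computing $\mu(\tilde f^{-1}D\cap\tilde g^{-1}D)=\mu(D)^2$ by explicitly counting and matching sub-disks under the two scaling maps. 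Your single orthogonality lemma replaces both: on a sub-disk $B$ of radius $q^{-\lambda_m-k}$ the factor $\chi([f_m(\cdot)])$ is constant, while $\chi([f_n(\cdot)])$ integrates to zero because $f_n(B)$ tiles by $\mathcal{O}$-cosets and $\int_{\mathcal{O}}\chi=0$; the case $n=m$ (giving mean zero) is subsumed for free since $|I_{n,n}|=1$ is simply counted in the diagonal. In place of the paper's elementary Lemma~\ref{lem-sum}, you bound the number of near-diagonal pairs via Cauchy--Schwarz on the level counts $N_j$, arriving at the same $(2k-1)\#K_N$ estimate. The character approach is the one that transfers most directly to the classical real-line Koksma setting and is arguably slicker here because the harmonic-analytic vanishing $\int_{\mathcal{O}}\chi=0$ absorbs the combinatorics; the paper's disk approach is more elementary and makes the ultrametric geometry of preimages completely explicit, which is the language reused later in the Hausdorff-dimension arguments of Sections~5--7.

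Two small points worth making explicit if you write this up: after reducing to $\Omega=\mathcal{O}$, hypothesis~(a) becomes $\lambda_n\ge 0$, which is what licenses both the partition of $\mathcal{O}$ into disks of radius $q^{-\lambda_m-k}$ and, together with $\lambda_n-\lambda_m\ge k$, the inclusion of $f_n(B)$ as a disjoint union of $\mathcal{O}$-cosets; and the identity $[a+z]=z$ for $z\in\mathcal{O}$ does require choosing $a$ with $[a]=0$, i.e.\ $a=\{a\}$, which is exactly the ``no carries'' point you flag at the end and which holds precisely because the integral/fractional-part decomposition of the paper is digitwise with respect to the fixed set of representatives $C$.
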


The following result, which has a counterpart in $\R$ (cf. \cite[p.35]{KN1974}), is also a consequence of Theorem \ref{thm-main}, an immediate consequence.
\begin{corollary}\label{thm-koksma2}
Fix $\beta \in \mathcal{F}$ with $|\beta|_\mathfrak{p}> 1$. Then the sequence $\bigl([\beta^n x]\bigr)_{n\geq 1}$ is uniformly distributed in $\mathcal{O}$ for almost all $x\in \mathcal{F}$.
\end{corollary}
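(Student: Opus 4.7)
The plan is to derive Corollary~\ref{thm-koksma2} as a direct application of Theorem~\ref{thm-main} to the family $f_n(x) = \beta^n x$, after covering $\mathcal{F}$ by a countable exhaustion of disks. Since $|\beta|_\mathfrak{p}>1$ and the value group of $\mathcal{F}$ equals $q^{\mathbb{Z}}$, I write $|\beta|_\mathfrak{p} = q^\lambda$ with $\lambda\geq 1$. Then $|f_n(x)-f_n(y)|_\mathfrak{p} = q^{n\lambda}\,|x-y|_\mathfrak{p}$ for all $x,y$, so each $f_n$ is a scaling map of scaling ratio $q^{\lambda_n}$ with $\lambda_n = n\lambda$ on any disk.

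As $\mathcal{F}$ is not itself a disk, I would fix the exhausting family $\Omega_k = D(0, q^k)$ for $k\geq 0$, which satisfies $\mathcal{F} = \bigcup_{k\geq 0} \Omega_k$, and verify the two hypotheses of Theorem~\ref{thm-main} on each $\Omega_k$. Hypothesis~(a) demands $q^{n\lambda}\geq \mathrm{diam}(\Omega_k)^{-1} = q^{-k}$ for every $n\geq 1$, which is immediate because $n\lambda\geq 1 \geq -k$. Hypothesis~(b) is equally transparent: the map $n\mapsto n\lambda$ is injective, hence $K_N=\{(n,n):1\leq n\leq N\}$, $\#K_N = N$, and $\sum_{N\geq 1} \#K_N/N^3 = \sum_N N^{-2} < \infty$. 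Theorem~\ref{thm-main} then produces a Haar-null subset $E_k\subset \Omega_k$ outside which $([\beta^n x])_{n\geq 1}$ is uniformly distributed in $\mathcal{O}$.

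To conclude, I would set $E:=\bigcup_{k\geq 0} E_k$; this is a countable union of null sets for the $\sigma$-finite Haar measure on $\mathcal{F}$, and therefore itself Haar-null. Since every $x\in\mathcal{F}$ lies in some $\Omega_k$, the asserted uniform distribution holds for every $x\in \mathcal{F}\setminus E$. There is no genuine obstacle: the whole argument is a direct verification that Theorem~\ref{thm-main} applies, with hypothesis~(a) being the only step warranting a line of check, resolved by the remark that the scaling ratios $q^{n\lambda}$ already dominate the reciprocal diameters of all the exhausting disks.
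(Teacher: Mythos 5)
Your proof is correct and is exactly the intended argument: the paper labels the corollary an immediate consequence of Theorem~\ref{thm-main} without spelling out the verification, and your check that $f_n(x)=\beta^n x$ is a global scaling map of ratio $q^{n\lambda}$ with distinct exponents (so $\#K_N=N$), together with the exhaustion of $\mathcal{F}$ by the disks $D(0,q^k)$, is precisely what is meant. Nothing to add.
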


Our proof of Theorem \ref{thm-main}  is different  from  the classical proof of Koksma's Theorem.
We will apply the famous Theorem of Davenport-Erd\"os-LeVeque (see Lemma \ref{thm-DEL}).
\medskip

When ${\rm char}\mathcal{F}=p>0$ the sequence $\bigl([x^n]\bigr)_{n\geq 1}$ is not Haar-u.d. but is almost surely \ u.d. under a constructed weighted measure $\mu^*$.

Denote the normalized Haar measure of $\mathcal{O}$ by $\mu$.
For $k\geq 1$, let
\[\mathcal{S}_k:=\left\{x= \sum_{i=v_\mathfrak{p}(x)}^\infty c_i \pi^i \in \mathcal{F}:\ c_i\in \mathbb{F}_q;\ c_i=0 \text{ if } p^k\nmid i \right\}.\]
Let $\widetilde{\mathcal{S}_k}=\mathcal{S}_k\cap\mathcal{O}$ be an additive closed subgroup of $\mathcal{O}$ and $\mu_k$ be the Haar measure of $\mathcal{S}_v$ normalized by $\mu_k(\widetilde{\mathcal{S}_k})=1$.
A Borel measure $\mu^*$ on $\mathcal{O}$ is defined as
\[\mu^*(D):= \left(1-\frac{1}{p}\right)\left(\mu(D)+\sum_{k=1}^\infty p^{-k}\mu_k\left(D\cap\widetilde{\mathcal{S}_k}\right)\right)\]
for any disk $D$ in $\mathcal{O}$.

\begin{theorem}\label{thm-char=p}
Suppose ${\rm char}\mathcal{F}=p>0$.  
\begin{itemize}
\item[{\rm(a)}] The sequence $\bigl([x^n]\bigr)_{n\geq 1}$ is not $\mu$-u.d. in $\mathcal{O}$ for all $x\in \mathcal{F}$ with $|x|_\mathfrak{p}>1$. 
\item [{\rm(b)}]For each positive integer $k$, the subsequence $\bigl([x^n]\bigr)_{n\geq 1,p^k\parallel n}$ {\rm (}the subsequence of $\bigl([x^n]\bigr)$ along $n$'s which are exaclty divided by $p^k${\rm )} is $\mu_k$-u.d. for Haar-almost all $x\in \mathcal{F}$ with $|x|_\mathfrak{p}>1$.
\item [{\rm(c)}] The sequence $\bigl([x^n]\bigr)_{n\geq 1}$ is $\mu^*$-u.d. for Haar-almost all $x\in \mathcal{F}$ with $|x|_\mathfrak{p}>1$.
\end{itemize}
\end{theorem}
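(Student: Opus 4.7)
The plan is to reduce everything to Theorem \ref{thm-koksma}(b) using the Frobenius structure available in characteristic $p$. The key identity, valid whenever $n = p^k m$ with $(m,p)=1$, is $[x^n] = [x^m]^{p^k}$: writing $x^m = \sum_j e_j \pi^j$, the Frobenius expansion $x^n = (x^m)^{p^k} = \sum_j e_j^{p^k} \pi^{jp^k}$ has nonzero coefficients only at positions divisible by $p^k$, so $[x^n] = \sum_{j \geq 0} e_j^{p^k} \pi^{jp^k} \in \widetilde{\mathcal{S}_k}$.

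For part (a), this identity with $k = 1$ shows that $[x^n] \in \widetilde{\mathcal{S}_1}$ for every $n$ with $p \mid n$, regardless of $x$. Since $\widetilde{\mathcal{S}_1}$ is a closed subgroup of Haar measure zero while $\{n : p \mid n\}$ has density $1/p$, one gets $\liminf_N \frac{1}{N}\#\{n \leq N : [x^n] \in \widetilde{\mathcal{S}_1}\} \geq 1/p > 0 = \mu(\widetilde{\mathcal{S}_1})$. By the Portmanteau theorem applied to the closed set $\widetilde{\mathcal{S}_1}$, this rules out weak convergence of the empirical measures to $\mu$.

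For part (b), I would study the map $\phi_k : \mathcal{O} \to \widetilde{\mathcal{S}_k}$, $y \mapsto y^{p^k}$. It is continuous; bijectivity follows from the perfectness of $\mathbb{F}_q$ (unique $p^k$-th roots in $\mathbb{F}_q$ lift coefficient-wise to unique $p^k$-th roots of $\pi$-adic expansions in $\mathcal{O}$); and the pushforward identity $\phi_{k*}\mu = \mu_k$ is checked on basic cylinders, since $\phi_k$ sends the cylinder $\{y : c_0 = a_0, \ldots, c_{N-1} = a_{N-1}\}$ of $\mu$-measure $q^{-N}$ bijectively onto the cylinder in $\widetilde{\mathcal{S}_k}$ fixing the free coefficients at positions $0, p^k, \ldots, (N-1)p^k$, which has $\mu_k$-measure $q^{-N}$. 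By Theorem \ref{thm-koksma}(b), for Haar-almost every $x$ with $|x|_\mathfrak{p}>1$, the sequence $([x^m])_{m\ge 1,\, p \nmid m}$ is $\mu$-u.d.; continuity of $\phi_k$ combined with the identity $[x^n] = \phi_k([x^m])$ then gives $\mu_k$-u.d. of $([x^n])_{p^k \parallel n}$.

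For part (c), fix $x$ in the countable intersection of full-measure sets obtained by applying (b) for every $k \geq 0$ (taking $\mu_0 = \mu$, where $k=0$ is Theorem \ref{thm-koksma}(b) itself). Set $A_k = \{n \ge 1 : v_p(n) = k\}$; a standard count gives $|A_k \cap [1,N]|/N \to d_k := (1-1/p)p^{-k}$. For any $f \in C(\mathcal{O})$, decompose
\[
\frac{1}{N}\sum_{n=1}^N f([x^n]) = \sum_{k \geq 0} \frac{|A_k \cap [1,N]|}{N}\cdot \frac{1}{|A_k \cap [1,N]|}\sum_{\substack{n \in A_k \\ n \leq N}} f([x^n]).
\]
Using the uniform bound $|A_k \cap [1,N]|/N \leq p^{-k}$, the tail $k \geq K$ is bounded by $\|f\|_\infty \sum_{k \geq K} p^{-k}$ independently of $N$, which legitimizes interchanging the limit and the sum; the inner averages converge to $\int f\, d\mu_k$ by part (b), yielding $\frac{1}{N}\sum_n f([x^n]) \to \sum_{k \geq 0} d_k \int f\, d\mu_k = \int f\, d\mu^*$. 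The main obstacle is the explicit pushforward computation $\phi_{k*}\mu = \mu_k$ of part (b); once this is in hand, the rest is bookkeeping on densities and tails.
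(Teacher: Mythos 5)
Your proof is correct, and parts (a) and (c) track the paper's argument closely: (a) exploits $[x^n]\in\widetilde{\mathcal{S}_1}$ for $p\mid n$ and the fact that $\widetilde{\mathcal{S}_1}$ has Haar measure zero (the paper packages this as a finite open cover of small measure, you invoke the Portmanteau inequality on the closed set directly — same content), and (c) is the identical tail‑truncation bookkeeping over the sets $A_k=\{n:v_p(n)=k\}$.

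Part (b) is where you genuinely diverge, and to your advantage. The paper defines the Frobenius map $g_k:\mathcal{F}\to\mathcal{S}_k$, verifies it is a measure isomorphism, and then \emph{re-runs} the metric machinery of Theorems \ref{thm-main} and \ref{thm-koksma}(b) on the subfield $\mathcal{S}_k$ to prove that $([y^{n'}])_{p\nmid n'}$ is $\mu_k$-u.d.\ for $\mu_k$-a.e.\ $y\in\mathcal{S}_k$, and finally transports this statement back to $\mathcal{F}$ via $g_k$. You instead take the already-proved Theorem~\ref{thm-koksma}(b) on $\mathcal{O}$ as a black box, observe the Frobenius commutation identity
\[
[x^{p^k m}] \;=\; \bigl[(x^m)^{p^k}\bigr] \;=\; \bigl([x^m]\bigr)^{p^k} \;=\; \phi_k\bigl([x^m]\bigr)\qquad (p\nmid m),
\]
and then use the standard fact that a continuous map pushes weak convergence forward: $\frac1J\sum_{j\le J}\delta_{[x^{m_j}]}\to\mu$ implies $\frac1J\sum_{j\le J}\delta_{\phi_k([x^{m_j}])}\to\phi_{k*}\mu=\mu_k$. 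Both routes hinge on the same Frobenius measure isomorphism (your $\phi_k$ is the restriction of the paper's $g_k$ to $\mathcal{O}$), but yours transports the conclusion \emph{forward} through $\phi_k$ rather than proving it on $\mathcal{S}_k$ and pulling it back; this saves a repeat of the Davenport--Erd\H{o}s--LeVeque argument and is the more economical reduction. Your verification that $\phi_{k*}\mu=\mu_k$ on cylinders is correct and matches the paper's computation on balls.
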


The reason for the dichotomy between Theorem \ref{thm-koksma} and Theorem \ref{thm-char=p} is that in the case of ${\rm char}\mathcal{F}=p$, the map $f_n(x)=\alpha x^n$ is locally scaling when $p\nmid n$ (see Lemma \ref{lem-scaling}), while the subsequence $\bigl([x^n]\bigr)_{p^k\mid n}$ falls into the zero measure compact set $\widetilde{\mathcal{S}_k}$.
In the case of ${\rm char}\mathcal{F}=0$, the map $f_n(x)=\alpha x^n$ is locally scaling.
\medskip

On the other hand, there are numerous works aiming to study sets of points that exhibit highly non-uniform distribution behavior in the real setting.
In 1957, Erd\"os and Taylor \cite{ET1957} studied the distribution of the sequence $(\{n_k x\})$. They showed that if the Hadamard lacunary condition $\inf_kn_{k+1}/n_k\geq q>1$ is satisfied, then the exceptional set
\[
\mathcal{B}: = \{x\in\R: (\{n_k x\}) \text{ is not u.d. in } [0,1]\}
\]
has full Hausdorff dimension. 
This result was generalized to the multidimensional case by Fan in 1993 \cite{Fan1993}, where  Riesz product measures are used as tool to prove that $\mathcal{B}$ has full Hausdorff dimension.
In 1980, Pollington \cite{Pol1980} proved that for any $\epsilon>0$, the set of highly biased points
\[
\{x>1 : \{x^n\}<\epsilon,\ \forall n\}
\]
is of full Hausdorff dimension.  
In 2014, Kahane \cite{Kahane2014} established an inhomogeneous analogue: for any sequence of real numbers $(b_n)$,
\[
\dim_{\mathcal{H}}\{x>1 : \Vert x^n-b_n\Vert <\epsilon,\ \forall n\}=1.
\]
Here $\Vert x \Vert=\min \{\{x\},1-\{x\}\}$.
Baker \cite{Baker2015} showed in 2015 that if $(r_n)$ is a sequence of numbers such that
\[\lim_{n\to\infty}(r_{n+1}-r_n)=\infty,\]
then
\[
\dim_{\mathcal{H}}\{x>1 : \lim_{n\to\infty}\Vert x^{r_n}-b_n \Vert =0\}=1.
\]

\medskip

In the non-Archimedean case, in contrast to the uniform distribution phenomena established in Theorems~\ref{thm-koksma}--\ref{thm-main}, there also exist large sets of parameters $x$ (of full or maximal Hausdorff dimension) for which the associated sequences exhibit non-uniform distributional or even highly biased behavior.
\medskip

For scaling maps $(f_n)$ of strictly increasing scaling ratios,  the exceptional set is of full Hausdorff dimension, as the following theorem shows.
For an element $x$ in $\mathcal{F}$, we use $[x]_0$ to denote its $0$-digit, name $[x]_0=c_0$ if $x$ takes the form \eqref{eq:pnumber}.  
We say that a sequence taking values in the set of symbols $\{a_0,a_1, \cdots, a_{m-1}\}$ is uniformly distributed if it has frequency $\frac{1}{m}$ for each symbol.

\begin{theorem}\label{thm-fulldimscaling}
Let $\Omega$ be a disk in $\mathcal{F}$.  Let $f_{n}:\Omega \to \mathcal{F}$ is a  scaling map of scaling ratio $q^{\lambda_n}$ for each $n\ge 1$. Suppose that $(\lambda_n)$ is strictly increasing. Then
\[
\dim_{\mathcal{H}}\bigl\{ x \in \Omega : \bigl([f_n(x)]_0 \bigr) \text{ is not uniformly distributed in } \mathcal{O}/\mathfrak{p} \bigr\} = 1.
\]
Consequently,
\[
\dim_{\mathcal{H}}\bigl\{ x \in \Omega : \bigl([f_n(x)] \bigr) \text{ is not uniformly distributed in } \mathcal{O} \bigr\} = 1.
\]
\end{theorem}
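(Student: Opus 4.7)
The strategy is to exhibit, for each $p$ slightly above $1/q$, a biased probability measure $\nu_p$ on $\Omega$ concentrated in the exceptional set
\[
E := \{x \in \Omega : ([f_n(x)]_0) \text{ is not uniformly distributed in } \mathcal{O}/\mathfrak{p}\},
\]
and to compute its pointwise dimension. Letting $p \to 1/q^{+}$ and invoking the mass distribution principle then yields $\dim_{\mathcal{H}} E \geq 1$, whence $\dim_{\mathcal{H}} E = 1$.

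The construction rests on the following independence structure. Because $f_n$ is a scaling bijection of ratio $q^{\lambda_n}$, the map $x \mapsto [f_n(x)]_0$ is constant on each sub-disk of radius $q^{-\lambda_n - 1}$ in $\Omega$, and on any disk of radius $\geq q^{-\lambda_n}$ it realizes each value in $\mathcal{O}/\mathfrak{p}$ on exactly a $1/q$ proportion of its sub-disks of radius $q^{-\lambda_n - 1}$. Since $(\lambda_n)$ is strictly increasing, it follows that for $n_0$ large enough the sequence $([f_n(x)]_0)_{n \geq n_0}$ is jointly independent and uniformly distributed on $\mathcal{O}/\mathfrak{p}$ under normalized Haar measure on $\Omega$. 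Given this, for each $p \in (1/q, 1)$ set $P(0) = p$ and $P(c) = (1-p)/(q-1)$ for $c \neq 0$, and define $\nu_p$ so that on each disk $D \subset \Omega$ of radius $q^{-\lambda_N - 1}$,
\[
\nu_p(D) \;=\; \frac{\prod_{n=n_0}^{N} P\bigl([f_n(x)]_0\bigr)}{\text{number of disks of this level sharing the same digit-chain}} \qquad (x \in D).
\]
Consistency across $N$ is immediate from $\sum_c P(c) = 1$.

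By the strong law of large numbers applied to the i.i.d.\ sequence $([f_n(x)]_0)$ under $\nu_p$, the asymptotic frequency of $0$ equals $p \neq 1/q$ for $\nu_p$-almost every $x$; hence $\nu_p(E) = 1$. A direct computation gives, for $\nu_p$-typical $x$,
\[
\frac{\log \nu_p(D)}{\log \mathrm{diam}(D)} \;=\; 1 - \frac{N}{\lambda_N}\left(1 - \frac{H(P)}{\log q}\right) + o(1),
\]
where $H(P) = -p\log p - (1-p)\log\frac{1-p}{q-1}$. Since $\lambda_n$ is a strictly increasing sequence of integers we have $N/\lambda_N \leq 1$ (after discarding finitely many small $n$), so the pointwise dimension is at least $H(P)/\log q$. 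As $p \to 1/q^{+}$, $H(P) \to \log q$, and the mass distribution principle then gives $\dim_{\mathcal{H}} E \geq H(P)/\log q \to 1$, whence $\dim_{\mathcal{H}} E = 1$. The ``Consequently'' clause follows because uniform distribution of $([f_n(x)])$ in $\mathcal{O}$ with respect to Haar measure would project, via reduction modulo $\mathfrak{p}$, to uniform distribution of the $0$-digits in $\mathcal{O}/\mathfrak{p}$.

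The main technical obstacle I anticipate is establishing the joint independence and uniformity of the digits $[f_n(x)]_0$ under Haar on $\Omega$, and constructing $\nu_p$ consistently across scales; particular care is needed for the finitely many indices $n$ with $\lambda_n$ below the level of $\Omega$, for which $[f_n(x)]_0$ may be constant on $\Omega$, but these can be discarded without affecting asymptotic frequencies or local dimension. Once the independence structure is in place, the SLLN and the pointwise-dimension calculation are routine.
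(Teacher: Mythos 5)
Your proposal is correct and is essentially the same argument as the paper's, just unpacked directly on $\Omega$ rather than routed through symbolic space. The paper assumes WLOG that $\Omega=\mathcal{O}$ and $\lambda_n\ge 0$, constructs auxiliary scaling maps $g_n$ of ratio $q^n$ with $g_{\lambda_n}=f_n$, and uses them to build an explicit isometric bijection $\psi_{\{g_n\}}:\mathcal{O}\to\mathcal{O}$ sending $x$ to the sequence of digits $[g_n(x)]_0$; the exceptional set then becomes (the preimage of) the set of sequences whose coordinates along $(\lambda_n)$ are not u.d., whose dimension is computed by a symbolic Eggleston-type lemma (Lemma 5.1) via a Bernoulli measure $\mu_P$ and the mass distribution principle, with $P\to$ uniform. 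Your digit-independence claim is precisely what makes $\psi_{\{g_n\}}$ an isometry, your measure $\nu_p$ is the pullback of $\mu_P$ under $\psi_{\{g_n\}}$ (with uniform mass on the free levels filled in by the auxiliary maps $g_n$), and your SLLN + pointwise dimension + mass distribution calculation is the content of the symbolic lemma. So you are proving the same thing by the same method, simply without introducing the intermediate coding map. The one cosmetic difference is that the paper obtains the exact dimension $1-\rho+\rho H(P)/\log q$ (with $\rho=\limsup n/\lambda_n$) of each biased set, whereas you only establish the lower bound $H(P)/\log q$; this is weaker but fully sufficient since both tend to $1$ as $P$ tends to the uniform vector. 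Your handling of the finitely many small indices and of intermediate scales (between consecutive $\lambda_n+1$) is correct, and your consistency check for $\nu_p$ is right for the same nested-disk reason the paper can define $\psi_{\{g_n\}}$.
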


We have an immediate corollary about the special sequence $(\beta^n x)$.

\begin{corollary}\label{cor-fulldim}
Fix $\beta \in \mathcal{F}$ with $|\beta|_\mathfrak{p}>1$. Then 
\[
\dim_{\mathcal{H}}\bigl\{ x \in\mathcal{F} : \bigl([\beta^n x]\bigr) \text{ is not uniformly distributed in } \mathcal{O} \bigr\} = 1.
\]
\end{corollary}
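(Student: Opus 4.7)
The plan is to derive Corollary \ref{cor-fulldim} as a direct application of Theorem \ref{thm-fulldimscaling} with $f_n(x)=\beta^n x$. The argument has three short steps, with essentially all the work absorbed into the already established theorem.

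First, I would verify that each $f_n$ is a scaling map with a strictly increasing sequence of scaling ratios. Since $\mathcal{F}$ is a non-Archimedean local field and $|\beta|_\mathfrak{p}>1$, the value group gives us an integer $\lambda\geq 1$ with $|\beta|_\mathfrak{p}=q^{\lambda}$. Then for all $x,y\in\mathcal{F}$,
\[
|f_n(x)-f_n(y)|_\mathfrak{p} = |\beta|_\mathfrak{p}^{n}\,|x-y|_\mathfrak{p} = q^{n\lambda}\,|x-y|_\mathfrak{p},
\]
so $f_n$ is a scaling map of ratio $q^{n\lambda}$ on any disk $\Omega\subset \mathcal{F}$, and the sequence $(n\lambda)_{n\geq 1}$ is strictly increasing, as required by the hypothesis of Theorem \ref{thm-fulldimscaling}.

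Second, since $\mathcal{F}$ itself is not a disk, I would pass to the standard exhaustion $\mathcal{F}=\bigcup_{k\geq 0}D(0,q^{k})$. Applying Theorem \ref{thm-fulldimscaling} with $\Omega = \Omega_k := D(0,q^{k})$ yields, for each $k$,
\[
\dim_{\mathcal{H}}\bigl\{x\in\Omega_k : ([\beta^n x]) \text{ is not uniformly distributed in }\mathcal{O}\bigr\}=1.
\]

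Third, I would conclude using the countable stability of Hausdorff dimension. Writing $E_k$ for the exceptional set inside $\Omega_k$ and $E=\bigcup_{k\geq 0}E_k$ for the global exceptional set in $\mathcal{F}$, we obtain
\[
\dim_{\mathcal{H}}E = \sup_{k\geq 0}\dim_{\mathcal{H}}E_k = 1,
\]
which matches the ``full'' dimension since $\mathcal{F}$ carries Hausdorff dimension $1$ under its $\mathfrak{p}$-adic metric. There is no genuine obstacle: the entire content lies in Theorem \ref{thm-fulldimscaling}, and the corollary is a routine globalization via exhaustion by disks combined with the observation that multiplication by $\beta^n$ is a scaling map whose scaling exponents form the strictly increasing arithmetic progression $n\lambda$.
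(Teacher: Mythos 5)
Your proof is correct and follows the same path the paper intends when it labels this an ``immediate corollary'' of Theorem~\ref{thm-fulldimscaling}: multiplication by $\beta^n$ is globally scaling with exponents $n\lambda$ (where $|\beta|_\mathfrak{p}=q^{\lambda}$, $\lambda\ge 1$) strictly increasing, so the theorem applies on any fixed disk; and since $D(0,1)\subset\mathcal{F}$ already yields a subset of the exceptional set with full Hausdorff dimension, the union over the exhausting disks (or countable stability) is only a formality.
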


The map $f_n(x)=\alpha x^n$ is locally scaling when ${\rm char}\mathcal{F}=0$ (see Lemma \ref{lem-scaling}).  
However, the scaling ratios are not strictly increasing. In fact, one cannot even extract a subsequence of density $1$ with increasing scaling ratios. Therefore, Theorem~\ref{thm-fulldimscaling} cannot be applied directly to the sequence $([\alpha x^n])$. Nevertheless, the case of $([\alpha x^n])$ can still be treated by alternative methods.

\begin{theorem}\label{cor-fulldim2}
Fix $\alpha \in \mathcal{F}$ with $\alpha \neq 0$. Suppose ${\rm char}\mathcal{F}=0$, then
\[
\dim_{\mathcal{H}}\{ x\in \mathcal{F} : |x|_\mathfrak{p}>1,\; \bigl([\alpha x^n]\bigr) \text{ is not uniformly distributed in } \mathcal{O} \}=1.
\]
\end{theorem}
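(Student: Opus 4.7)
The plan is to reduce the problem to the subsequence of indices $n$ coprime to $p$, along which $f_n(x)=\alpha x^n$ is a genuine scaling map with strictly increasing ratios, and then mimic the Cantor construction from the proof of Theorem~\ref{thm-fulldimscaling}, sharpened so that a positive density of the terms of $([\alpha x^n])$ is forced into a Haar-small target and thereby violates uniform distribution of the full sequence.

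First I would localize and check the scaling property. Fix an integer $m\geq 1$ and set $D_0:=\pi^{-m}+\mathcal O$; this is a diameter-$1$ disk contained in the annulus $\{|x|_\mathfrak p=q^m\}$, and $\dim_{\mathcal H}D_0=1$. For $x_0\in D_0$ and $h\in\mathcal O$ the non-Archimedean Taylor expansion
\[
f_n(x_0+h)-f_n(x_0)=\alpha n x_0^{n-1}h+\alpha\sum_{k=2}^n\binom{n}{k} x_0^{n-k}h^k,
\]
together with $\left|\binom{n}{k}\right|_{\mathfrak p}\leq 1$, $|x_0|_\mathfrak p=q^m$ and $|h|_\mathfrak p\leq 1$, shows that as soon as $p\nmid n$ (so $|n|_\mathfrak p=1$) the linear term strictly dominates, making $f_n|_{D_0}$ a scaling map of ratio $q^{\lambda_n}$ with $\lambda_n=m(n-1)-v_\mathfrak p(\alpha)$. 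Restricted to $\{n:p\nmid n\}$ the exponents $(\lambda_n)$ are strictly increasing, which is precisely the hypothesis of Theorem~\ref{thm-fulldimscaling}; monotonicity is lost exactly when $p\mid n$, which is why that theorem cannot be invoked for the full sequence.

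Next, fix an integer $k\geq 1$ with $q^{-k}<1-1/p$ (one may take $k=1$ for any local field other than $\mathbb Q_2$, and $k=2$ for $\mathbb Q_2$), and let $n_1<n_2<\cdots$ enumerate $\{n\geq 1:p\nmid n\}$. Running the Cantor construction from the proof of Theorem~\ref{thm-fulldimscaling} along the family $(f_{n_\ell})_{\ell\geq 1}$, but refined so that at step $\ell$ each surviving disk is replaced by the preimage under $f_{n_\ell}$ of the set $\{y\in\mathcal F:[y]\in\pi^k\mathcal O\}$ inside its image, produces a compact set $E\subset D_0$ with $[\alpha x^{n_\ell}]\in\pi^k\mathcal O$ for every $\ell\geq 1$ and every $x\in E$. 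For any such $x$,
\[
\liminf_{N\to\infty}\frac{1}{N}\#\bigl\{1\leq n\leq N:[\alpha x^n]\in\pi^k\mathcal O\bigr\}\geq 1-\frac{1}{p}>q^{-k}=\mu(\pi^k\mathcal O),
\]
so $([\alpha x^n])_{n\geq 1}$ fails to be uniformly distributed in $\mathcal O$; in particular $E$ is contained in the exceptional set.

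The main obstacle is showing $\dim_{\mathcal H}E=1$: the construction now prescribes $k$ digits per step rather than a single one, and it is refined only along the density-$(1-1/p)$ subsequence $(n_\ell)$. This should follow from the same Moran/entropy estimate used for Theorem~\ref{thm-fulldimscaling}, since the exponents $\lambda_{n_\ell}$ grow linearly in $\ell$ and therefore easily absorb the additional factor $q^{-k}$ lost at each step. Once $\dim_{\mathcal H}E=1$ is established the desired conclusion follows, the matching upper bound $\dim_{\mathcal H}\leq 1$ being automatic from $\dim_{\mathcal H}\mathcal F=1$.
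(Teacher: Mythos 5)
Your overall strategy is the paper's own: localize to an annulus, use the scaling property of $f_n(x)=\alpha x^n$ along $p$-coprime indices, and build a Moran (in the paper's language, $q$-homogeneous) set of points whose orbits are trapped in a Haar-small target. This is Proposition \ref{prop-fulldim2} with $K=1$ together with Lemma \ref{thm-main-ebp}. Your scaling computation on $D_0=\pi^{-m}+\mathcal O$ (giving $\lambda_n = m(n-1)-v_\mathfrak{p}(\alpha)$ for $p\nmid n$) and your observation that forcing $[\alpha x^{n_\ell}]\in\pi^k\mathcal O$ along a density-$(1-1/p)$ subsequence violates uniform distribution are both correct.

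The gap is in the final dimension claim. For a \emph{fixed} $m$, the set $E\subset D_0$ does \emph{not} have Hausdorff dimension $1$. The $p$-coprime integers satisfy $n_\ell\sim\frac{p}{p-1}\ell$, so $\lambda_{n_\ell}$ grows linearly in $\ell$ with slope roughly $m\,\frac{p}{p-1}$, while the construction fixes $k$ additional digits at each step. These two growth rates are of the same order, so the fraction of constrained digits does \emph{not} tend to $0$; by Lemma \ref{thm-main-ebp} (with $H_n\equiv k$),
\[
\dim_{\mathcal H}E=\liminf_{\ell\to\infty}\frac{\lambda_{n_\ell}-(\ell-1)k}{\lambda_{n_\ell}+k}
=1-\frac{k(p-1)}{mp}\;<\;1.
\]
The phrase ``easily absorb the additional factor $q^{-k}$'' is therefore not justified: the loss per step is comparable, not negligible. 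To finish one must let $m\to\infty$, i.e.\ carry out the construction on annuli $\{|x|_\mathfrak p=q^m\}$ for every $m$, obtaining sets $E_m$ with $\dim_{\mathcal H}E_m\to 1$; since each $E_m$ sits inside the exceptional set, its dimension is $\sup_m\dim_{\mathcal H}E_m=1$. This is precisely the role of the parameter $L$ in the paper's proof of Proposition \ref{prop-fulldim2} (``Since $L$ can be arbitrarily large\ldots, $\dim_{\mathcal H}E=1$''). Your proof stops one step short of this limiting argument.
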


In order to prove Theorem \ref{cor-fulldim2}, we are led to deal with the set of points $x$ such that $([\alpha x^n])$ are highly biased (far from uniformly distributed).
We show that this set possesses a $q$-homogeneous structure (see Section \ref{sec:p-homogeneous}), and then its Hausdorff dimension can be explicitly computed (see Lemma \ref{thm-main-ebp}).
Here, by a $q$-homogeneous set we mean a special type of homogeneous Moran type set contained in a disk.
\medskip

In Section 2, basic notions and notations of local fields and $q$-homogeneous sets are provided.
Theorem \ref{thm-main} is proved in Section 3 and Theorem \ref{thm-koksma} and Theorem \ref{thm-char=p} are proved in Section 4.
Theorem \ref{thm-fulldimscaling} and Theorem \ref{cor-fulldim2} are proved respectively in Section 5 and Section 6.
In Section 7, we talk about some metrical results on the distribution based on Lemma \ref{thm-main-ebp}.

\section{Preliminary}

We start with a quick review of local fields. Then we introduce the concept of $q$-homogeneous sets, which is a special type of homogeneous Moran sets.

\subsection{Local fields}
Recall that a local field is a field that is complete with respect to a discrete valuation and has a finite residue class field.

The followings are some notions about a non-Archimedean local field $\mathcal{F}$ in this paper:\\
$v_\mathfrak{p}: \mathcal{F} \rightarrow \Z \cup \{+\infty\}$ is the the normalized exponential valuation satisfying that
\begin{itemize}
    \item[(i)] \(v_\mathfrak{p}(x) = +\infty\) only when \(x = 0\),
    \item[(ii)] \(v_\mathfrak{p}(xy) = v_\mathfrak{p}(x) + v_\mathfrak{p}(y)\),
    \item[(iii)] \(v_\mathfrak{p}(x+y) \geq \min\{ v_\mathfrak{p}(x), v_\mathfrak{p}(y)\}\);
\end{itemize}
$|\cdot|_\mathfrak{p}=q^{-v_\mathfrak{p}(\cdot)}$ is the normalized absolute value;\\
$\mathcal{O}:=\{x\in \mathcal{F}:v_\mathfrak{p}(x)\geq 0\}=\{x\in \mathcal{F}:|x|_\mathfrak{p}\leq 1\}$ is the valuation ring;\\
$\mathfrak{p}:=\{x\in \mathcal{F}:v_\mathfrak{p}(x)\geq 1\}=\{x\in \mathcal{F}:|x|_\mathfrak{p}< 1\}$ is the only maximal ideal of $\mathcal{O}$;\\
$\mathcal{O}/\mathfrak{p}$ is the residue class field and it is finite;\\
$q:=\#\mathcal{O}/\mathfrak{p}$;\\
$\pi$ is a fixed prime element satisfying that $v_\mathfrak{p}(\pi)=1$.

The absolute value of a local field satisfies the strong triangle inequality:
\[|x+y|_\mathfrak{p} \leq \max\{|x|_\mathfrak{p},|y|_\mathfrak{p}\} \qquad \forall x,y\in \mathcal{F}.\]
In particular, if $|x|_\mathfrak{p}>|y|_\mathfrak{p}$, then $|x+y|_\mathfrak{p}=|x|_\mathfrak{p}$.

Let $C\subset \mathcal{O}$ be a complete system of representatives for $\mathcal{O}/\mathfrak{p}$ with $0\in C$, then every $x\neq 0$ in $\mathcal{F}$ can be uniquely written as
\[
    x = \sum_{n = v_\mathfrak{p}(x)}^\infty c_n \pi^{n} \quad (c_n \in C, \ x_{v_\mathfrak{p}(x)} \neq 0).
\]
Let $\{x\}:=\sum_{n=v}^{-1}c_n \pi^{n}$, called the fraction part of $x$, and let   $[x]:=\sum_{n = 0}^{\infty}c_n \pi^{n}$, called the integral part of $x$.

For $k\in \Z$ and $a\in \mathcal{F}$, we denote by
\[D(a, q^{k})=\{x\in \mathcal{F}: |x-a|_\mathfrak{p}\leq q^k\} =a+\pi^{-k}\mathcal{O}\] 
the disk of radius $q^k$ centered at $a$.

The typical measure $\mu$ on $\mathcal{F}$ is defined by
\[\mu(D(a, q^{k}))={\rm diam}(D(a, q^{k}))=1/q^k\]
for any disk $D(a, q^{k})$. Its restriction on $\mathcal{O}$ is the normalized Haar measure of the additive group $\mathcal{O}$.

The definition of Hausdorff dimension in $\mathcal{F}$ refers to the natural metric $d(x,y)=|x-y|_\mathfrak{p}$ (cf. \cite{Fal2014} for Hausdorff dimension).

A non-Archimedean local field $\mathcal{F}$ is precisely $\mathbb{F}_q((\pi))$ or a finite extension of $\Q_p$, where $p$ is a prime integer, $q$ is a prime power and $\pi$ is a transcendental element (cf. \cite{JN1999}).
If ${\rm char}\mathcal{F}=0$, let $e$ be the ramification index of the extension $\mathcal{F} \mid \Q_p$. Then $[\mathcal{F}:\Q_p]=e\log_p q$.

 Recall that the \textit{$p$-adic valuation}  
\[
v_p : \mathbb{Z} \to \mathbb{Z}_{\ge 0} \cup \{\infty\}
\]
is defined as follows: for any nonzero integer $k$,  
\[
v_p(k)= \text{the largest integer } r \text{ such that } p^r \mid k,
\]
and we set $v_p(0)=\infty$.
Let $\mathcal{F}/\mathbb{Q}_p$ be a finite extension   with  ramification index  $e$.  Then for every integer $k\in\mathbb{Z}\subset \mathcal{F}$ we have
\[
v_{\mathfrak{p}}(k)=e\,v_p(k),
\]
which expresses the compatibility between the $\mathfrak{p}$-adic valuation on $\mathcal{F}$ and the usual $p$-adic valuation on $\mathbb{Q}_p$.

\subsection{\texorpdfstring{$q$}.-homogeneous subset of \texorpdfstring{$\mathcal{F}$}.}\label{sec:p-homogeneous}
We regard the field $\mathcal{F}$ as an infinite tree 
$(\mathcal{T}, \mathcal{E})$.  
The vertex set $\mathcal{T}$ consists of all closed balls in $\mathcal{F}$.  
A vertex is said to be at level $n \in \mathbb{Z}$ if the corresponding ball $B$ satisfies $\mu(B)=q^{-n}$ (note that for a closed ball $B$, $\mu(B)={\rm diam}(B)$).
The edge set $\mathcal{E}$ is defined as the collection of pairs $(B', B) \in \mathcal{T} \times \mathcal{T}$ such that
\[B' \subset B, \qquad \mu(B') = q^{-1} \mu(B).\]  
We write $B' \prec B$ to indicate this relation, and call $B'$ a \textit{son} of $B$, while $B$ is the \textit{father} of $B'$.

\begin{figure}[ht]
  \centering
  \includegraphics[width=1\textwidth]{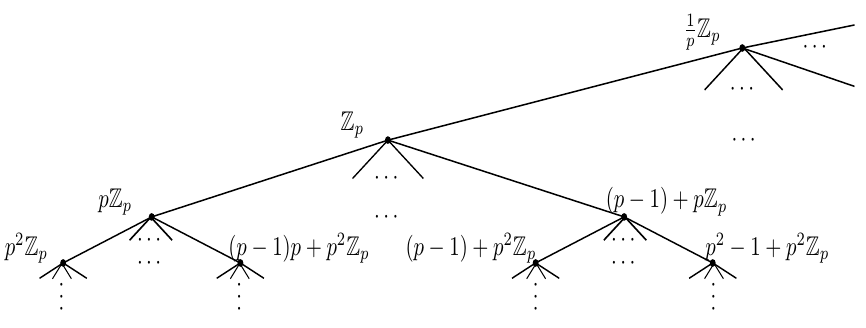}
  \caption{The field $\mathbb{Q}_p$ viewed as an infinite tree.}
  \label{Qp}
\end{figure}

Any bounded subset $\Omega \subseteq \mathcal{F}$ can be represented by a subtree $(\mathcal{T}_\Omega, \mathcal{E}_\Omega)$ of $(\mathcal{T}, \mathcal{E})$.  
Indeed, let $B^*$ be the smallest ball containing $\Omega$, which serves as the root of the subtree.  
For each $x \in \Omega$, there exists a unique sequence of nested balls 
\[
   \cdots \prec B_n \prec B_{n-1} \prec \cdots \prec B_1 \prec B_0 = B^*
\]
such that 
\[
   x \in B_n, \qquad \forall n \geq 0.
\]
We define $\mathcal{T}_\Omega$ to be the set of all such balls contained in $B^*$ that intersect $\Omega$,  
and $\mathcal{E}_\Omega$ to be the set of all edges $B_i \prec B_{i+1}$ arising in this way.  

A subtree $(\mathcal{T}', \mathcal{E}')$ is called \textit{homogeneous} if the number of sons of each $B \in \mathcal{T}'$ depends only on $\mu(B)$.  
If this number is always either $1$ or $q$, we say that $(\mathcal{T}', \mathcal{E}')$ is a \textit{$q$-homogeneous tree}.  
Accordingly, a bounded set is called \textit{homogeneous} (resp. \textit{$q$-homogeneous}) if its associated tree is homogeneous (resp. $q$-homogeneous).  

For a bounded homogeneous set $\Omega$, we define the set of \textit{branch levels} by
\[
I_{\Omega} = \Big\{ n \in \mathbb{Z} : \exists\, B \in \mathcal{T}_\Omega \text{ with } \mu(B) = q^{-n} 
\text{ and } B \text{ has } q \text{ sons} \Big\}.
\]

Intuitively, a $q$-homogeneous subset of $\mathcal{O}$ with branch level set $\mathbb{N}\setminus\{\lambda_n\}$ can be described as follows:  
at each level $\lambda_n$, exactly $q-1$ branches are removed, while at all other levels every branch is retained.

\begin{figure}[ht]
 	\centering
 	\includegraphics[width=0.7\linewidth]{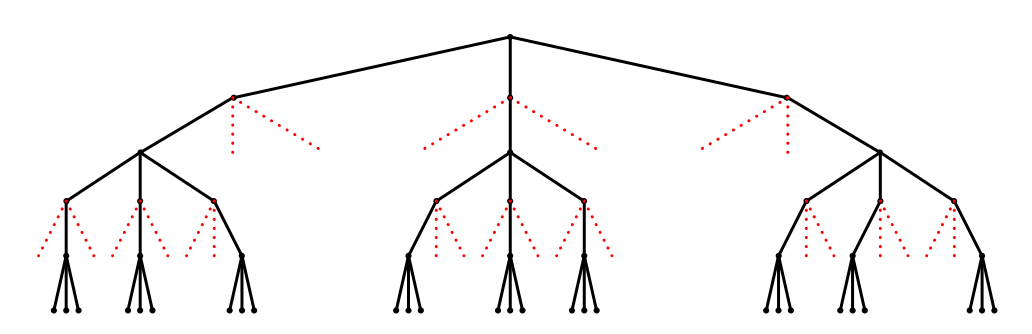}
 	\caption{A $3$-homogeneous set with $0,2,4\in I_{\Omega}$ .}
 	\label{fig:1}
 \end{figure}

\section{Proof of Theorem \ref{thm-main}}

The proof of Theorem \ref{thm-main} is based on the following two facts. The first one is that for a scaling map $f:\mathcal{O} \rightarrow \mathcal{F}$, the associated map $\tilde{f}:\mathcal{O} \rightarrow \mathcal{O}$ defined by $\tilde{f}(x) = [f(x)]$ preserves the Haar measure (see Lemma \ref{lem-1}). The second one is that for any disk $D$ in $\mathcal{O}$, if two scaling maps $f,g:\mathcal{O} \rightarrow \mathcal{F}$ have largely different scaling ratios, then $1_D \circ \tilde{f}$ and $1_D \circ \tilde{g}$ are non-correlated (see Lemma \ref{lem-2}). Then we can apply Davenport-Erd\"os-LeVeque Theorem (Lemma \ref{thm-DEL}).

Let $f$ be a scaling map from $\mathcal{O}$ into $\mathcal{F}$.
Let ${\rm proj}(x)=[x]$ for $x\in \mathcal{F}$.
Then define the associated map
\[\tilde{f}={\rm proj} \circ f: \mathcal{O} \to \mathcal{O}.\]
Given a non-empty open set $\Omega$, we define a Borel probability measure on $\Omega$ by
\[\mu|_\Omega(B) = \frac{\mu(\Omega\cap B)}{\mu(\Omega)}.\]
It is the conditional measure of the Haar measure on $\Omega$.

Now we establish the invariance of the Haar measure under the map $\tilde{f}: \mathcal{O} \to \mathcal{O}$.
\begin{lemma}[$\tilde{f}$-invariance of Haar measure]\label{lem-1}
Assume  that  $f :\mathcal{O} \to \mathcal{F}$ is a scaling map having scaling ratio $q^\lambda$ with $\lambda\ge 0$.  
Then the Haar measure $\mu$ on $\mathcal{O}$ is $\tilde{f}$-invariant.
\end{lemma}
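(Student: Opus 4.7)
The plan is to verify the invariance on the generating $\pi$-system of sub-disks $D = a + \pi^{k}\mathcal{O}$ of $\mathcal{O}$, with $a \in \mathcal{O}$ and $k \ge 0$; since $\mu$ is finite and these disks generate the Borel $\sigma$-algebra of $\mathcal{O}$, it suffices to prove $\mu(\tilde f^{-1}(D)) = \mu(D) = q^{-k}$ for every such $D$. The strategy is to first change variables via $f$ and then evaluate the Haar measure of $\mathrm{proj}^{-1}(D)$ inside the image disk by decomposing that disk into cosets of $\mathcal{O}$.

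First I would show that $f$ is a bijection from $\mathcal{O}$ onto the closed ball $D(f(0), q^{\lambda})$. Injectivity is immediate from the scaling identity. For surjectivity, the normalised map $g(x) = \pi^{\lambda}(f(x) - f(0))$ is an isometry of $\mathcal{O}$ into itself fixing $0$, so it descends modulo $\pi^{n}$ to an injective self-map of the finite set $\mathcal{O}/\pi^{n}\mathcal{O}$, which is therefore bijective; hence $g(\mathcal{O})$ is dense in $\mathcal{O}$, and compactness forces $g(\mathcal{O}) = \mathcal{O}$, so that $f(\mathcal{O}) = f(0) + \pi^{-\lambda}\mathcal{O} = D(f(0), q^{\lambda})$. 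Combined with the scaling identity, this yields the change-of-variable formula $\mu(f^{-1}(E)) = q^{-\lambda}\mu(E)$ for every Borel $E \subseteq D(f(0), q^{\lambda})$, giving
\[
\mu(\tilde f^{-1}(D)) \;=\; q^{-\lambda}\,\mu\!\left(\mathrm{proj}^{-1}(D) \cap D(f(0), q^{\lambda})\right).
\]

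To evaluate the right-hand side I would partition $D(f(0), q^{\lambda})$ into its $q^{\lambda}$ cosets modulo $\mathcal{O}$ and handle each coset $b + \mathcal{O}$ separately. The key observation is the digit-disjointness of the integral and fractional parts: writing $b = \{b\} + [b]$ and $b + z = \{b\} + ([b]+z)$ for $z \in \mathcal{O}$, the finite sum $\{b\}$ lives in strictly negative $\pi$-degrees while $[b]+z \in \mathcal{O}$ lives in non-negative $\pi$-degrees, so the two expansions concatenate without any carry and $[b+z] = [b] + z$. Consequently $\{z \in \mathcal{O} : [b+z] \in D\} = D - [b]$ is a sub-disk of $\mathcal{O}$ of measure $q^{-k}$; summing over the $q^{\lambda}$ cosets yields $\mu(\mathrm{proj}^{-1}(D) \cap D(f(0), q^{\lambda})) = q^{\lambda - k}$, whence $\mu(\tilde f^{-1}(D)) = q^{-k} = \mu(D)$ as required. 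The only step that is not completely routine is the surjectivity of $f$ onto the target ball; without it the change-of-variable formula would fail and the coset count would be meaningless, so the non-Archimedean rigidity of isometric self-embeddings of $\mathcal{O}$ is really doing the work. Once surjectivity is in hand, everything reduces to a clean digit-by-digit computation in the $\pi$-adic expansion.
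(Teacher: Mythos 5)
Your proof is correct and follows essentially the same strategy as the paper: decompose the image ball $f(\mathcal{O})=D(f(0),q^{\lambda})$ into its $q^{\lambda}$ cosets of $\mathcal{O}$, observe that $\mathrm{proj}$ acts as a translation on each, and combine with the measure-scaling property of $f$ to conclude $\mu(\tilde f^{-1}(D))=\mu(D)$. The one place you go further than the paper is in actually proving surjectivity of $f$ onto the ball (via the isometry $g=\pi^{\lambda}(f-f(0))$ descending to bijections of the finite quotients $\mathcal{O}/\pi^{n}\mathcal{O}$ together with compactness), a fact the paper's proof asserts without justification; this is a worthwhile addition, and the rest of your digit-level verification that $[b+z]=[b]+z$ for $z\in\mathcal{O}$ is also sound.
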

\begin{proof}
First notice that $f(\mathcal{O})$ is a disk of radius $q^{\lambda}$, which is the union of $q^\lambda$ disks of radius $1$.
We list these disks by $D_1, \cdots, D_{q^\lambda}$. Each disk $D_j$ is mapped onto $\mathcal{O}$ under the map ${\rm proj}(x) = [x]$.
This map is just a translation, the translation by the common fractional part $r_j$ of points in $D_j=D(r_j, 1)$. So Haar measure is preserved by the map ${\rm proj}:D_j \mapsto \mathcal{O}$.

\begin{figure}[ht]
 	\centering
 	\includegraphics[width=0.8\linewidth]{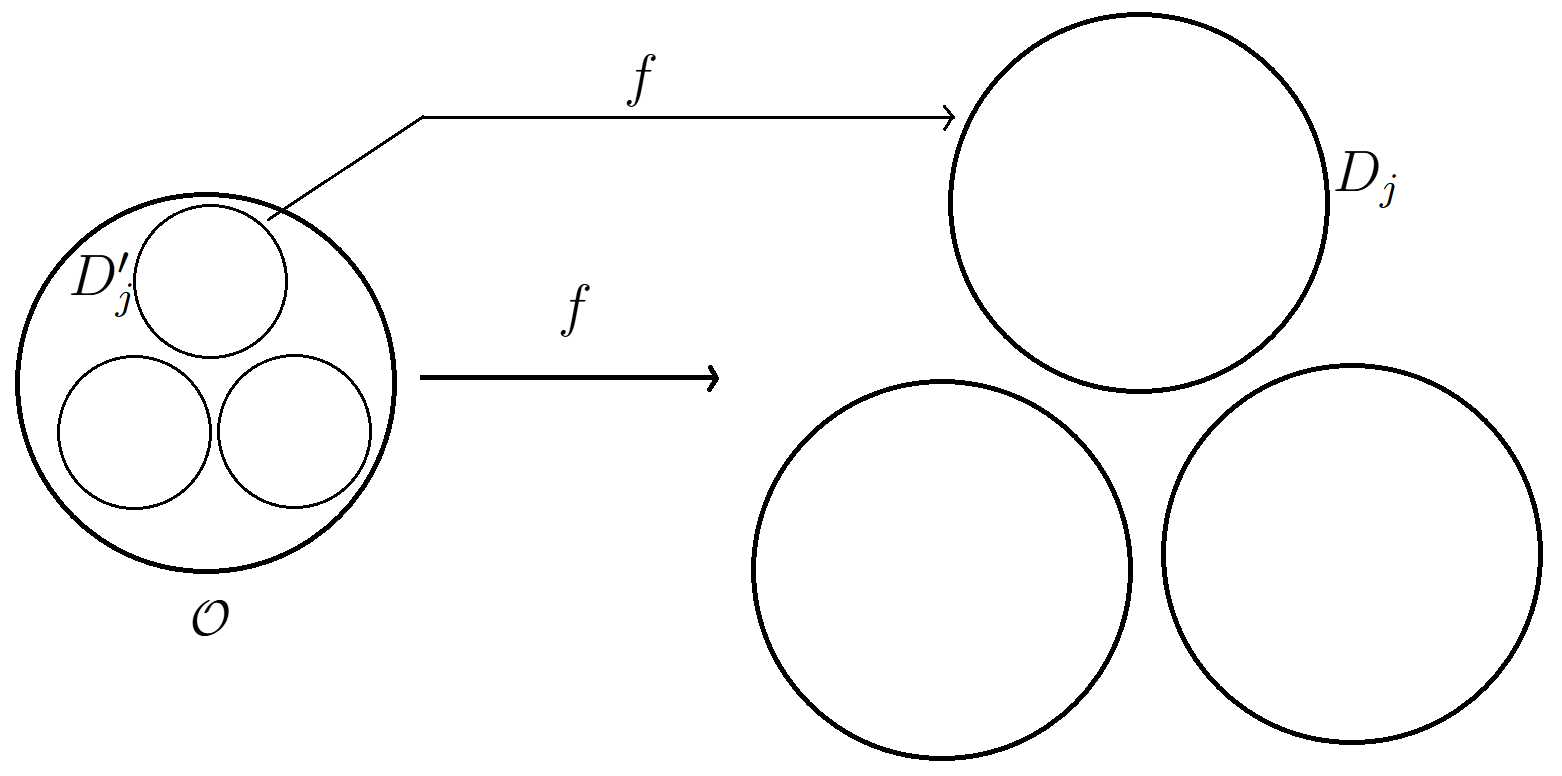}
 	\label{lem 3-1}
\end{figure}

Let $D_1' , \cdots, D_{p^{\lambda}}'$ be the preimages of $D_1, \cdots, D_{q^\lambda}$ under $f$, then we have $\mathcal{O} =\bigsqcup_{j=1} ^{q^\lambda}D_j'$ and the restriction $\tilde{f}: D_j' \to \mathcal{O}$ is scaling and bijective for each $j$.
As the Haar measure $\mu$ on $\mathcal{O}$ has the decomposition $\mu =q^{-\lambda}\sum_{j=1}^{q^\lambda} \mu|_{D_j'}$, we get  
\[\mu\circ \tilde{f}^{-1} = q^{-\lambda}\sum_{j=1}^{q^\lambda} \mu|_{D_j'}\circ \tilde{f}^{-1}.\]

Now we show $\mu|_{D_j'}\circ \tilde{f}^{-1}=\mu$ to finish the proof. Notice that $\tilde{f}: D_j' \to \mathcal{O}$ is a scaling bijection of scaling ratio $q^{\lambda}$ for each $j$. Take an arbitrary disk $B$ of radius $q^{-m}$ in $\mathcal{O}$. Then the preimage 
$\tilde{f}^{-1}(B)$ is a disk of radius $q^{-m-\lambda}$ contained in some $D_j'$. Therefore
\[\mu|_{D_j'}\circ \tilde{f}^{-1}(B) = \frac{\mu(\tilde{f}^{-1}(B))}{\mu(D_j')} = \frac{q^{-m-\lambda}}{q^{-\lambda}} = q^{-m} =\mu(B).\]
That means $\mu|_{D_j'}\circ \tilde{f}^{-1}=\mu$.
\end{proof}

The following lemma shows the non-correlation of $1_D \circ \tilde{f}$ and $1_D \circ \tilde{g}$ under suitable condition on the scaling functions $f$ and $g$ and on the disk $D$, where $1_B$ denotes the indicator function of a set $B$.
In the following, $\mathbb{E}$ refers to the expectation with respect to the Haar measure $\mu$.
 
\begin{lemma}\label{lem-2}
Let $f$ and $g$ be two scaling maps from $\mathcal{O}$ into $\mathcal{F}$ of scaling ratios $q^{\lambda_{f}}$ and $q^{\lambda_g}$
with $\lambda_f>\lambda_g\geq 0$.
Assume that $D\subset \mathcal{O}$ is a disk of radius $q^{-\gamma}$ with $0\le \gamma\le \lambda_f-\lambda_g$. Then 
$$
       \mathbb{E} (1_D \circ \tilde{f} \cdot 1_D \circ \tilde{g} ) =\mu(D)^2. 
$$
\end{lemma}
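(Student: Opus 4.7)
The plan is to rewrite
\[
\mathbb{E}\bigl(1_D\circ\tilde f\cdot 1_D\circ\tilde g\bigr)=\mu\bigl(\tilde f^{-1}(D)\cap\tilde g^{-1}(D)\bigr)
\]
and then to apply the geometric decomposition used in the proof of Lemma~\ref{lem-1} twice: once for $g$ to slice $\tilde g^{-1}(D)$ into small disks, and once for $f$ restricted to each slice. The hypothesis $\gamma\le\lambda_f-\lambda_g$ is exactly what makes the second step go through.

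First, since $g$ is scaling with ratio $q^{\lambda_g}$, the argument at the start of the proof of Lemma~\ref{lem-1} shows that $\mathcal{O}$ partitions into $q^{\lambda_g}$ disks of radius $q^{-\lambda_g}$ on each of which $\tilde g$ is a scaling bijection onto $\mathcal{O}$. Pulling back the disk $D$ of radius $q^{-\gamma}$ through each piece yields
\[
\tilde g^{-1}(D)=\bigsqcup_{j=1}^{q^{\lambda_g}} E_j,
\]
where every $E_j$ is a disk of radius $q^{-\lambda_g-\gamma}$.

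Next I fix a single $E_j$ and analyse $\tilde f|_{E_j}$. Because $f$ is scaling with ratio $q^{\lambda_f}$, the image $f(E_j)$ is a disk of radius $q^{\lambda_f-\lambda_g-\gamma}$, which by hypothesis is $\ge 1$. This is exactly the setting in which the proof of Lemma~\ref{lem-1} works inside $E_j$: $f(E_j)$ splits into $q^{\lambda_f-\lambda_g-\gamma}$ unit disks, each projected by $[\cdot]$ isometrically onto $\mathcal{O}$, and the pull-back decomposes $E_j$ into disks of radius $q^{-\lambda_f-\gamma}$ on which $\tilde f$ is a scaling bijection onto $\mathcal{O}$. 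Consequently $\tilde f|_{E_j}$ pushes the conditional Haar measure $\mu|_{E_j}$ forward to $\mu$, giving
\[
\mu\bigl(E_j\cap\tilde f^{-1}(D)\bigr)=\mu(E_j)\,\mu(D)=q^{-\lambda_g-2\gamma}.
\]
Summing over $j=1,\dots,q^{\lambda_g}$ then produces $q^{-2\gamma}=\mu(D)^2$, as required.

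The only delicate point, and hence the main obstacle, is justifying that the measure-preservation argument of Lemma~\ref{lem-1} transplants cleanly from $\mathcal{O}$ to each subdisk $E_j$. The assumption $\gamma\le\lambda_f-\lambda_g$ is precisely what ensures $f(E_j)$ has diameter $\ge 1$, so that it contains at least one full unit disk and the projection $[\cdot]$ surjects onto $\mathcal{O}$ from $f(E_j)$; without this condition the key step of the Lemma~\ref{lem-1} decomposition would fail and the identity would not hold.
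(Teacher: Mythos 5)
Your argument is correct and follows essentially the same route as the paper: decompose $\mathcal{O}$ via the $q^{\lambda_g}$ bijectivity pieces of $\tilde g$, observe that each slice $E_j=\tilde g_j^{-1}(D)$ has radius $q^{-\lambda_g-\gamma}\ge q^{-\lambda_f}$ thanks to $\gamma\le\lambda_f-\lambda_g$, and then use the Lemma~\ref{lem-1} mechanism for $f$ on each $E_j$. One small slip: the pull-backs of the unit disks under $f$ that partition $E_j$ have radius $q^{-\lambda_f}$, not $q^{-\lambda_f-\gamma}$; with this correction the rest of your computation goes through unchanged.
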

\begin{figure}[ht]
 	\centering
 	\includegraphics[width=0.8\linewidth]{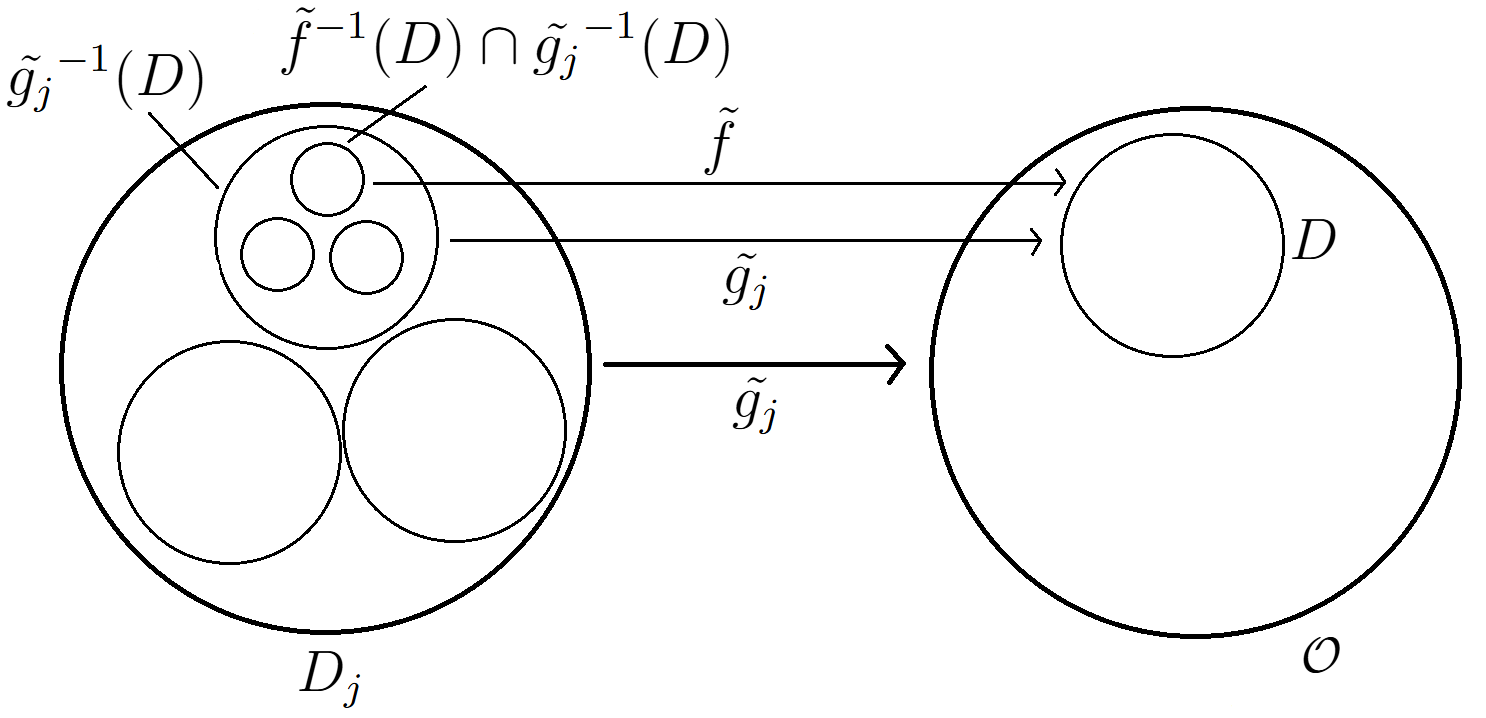}
 	\label{lem 3-2}
\end{figure}
\begin{proof}
Since $\mathbb{E} (1_D \circ \tilde{f} \cdot 1_D \circ \tilde{g} ) = \mu(\tilde{f}^{-1}(D)\cap \tilde{g}^{-1}(D))$, we only need to prove the following equality
\[ \mu(\tilde{f}^{-1}(D)\cap \tilde{g}^{-1}(D))= \mu(D)^2.\]
In the proof of Lemma \ref{lem-1}, we have seen that $\tilde{g}: \mathcal{O} \to \mathcal{O}$ is a $q^{\lambda_g}$-to-1 map. Decompose $\mathcal{O}$ into the union of the disks $D_j$ with $0\le j\le q^{\lambda_g}-1$, 
so that the restriction $\tilde{g_j}:= \tilde{g}: D_j \to \mathcal{O}$ is bijective. Therefore
$$
\mu(\tilde{f}^{-1}(D)\cap \tilde{g}^{-1}(D)) = \sum_{j=0}^{q^{\lambda_g}-1} \mu(\tilde{f}^{-1}(D)\cap \tilde{g_j}^{-1}(D)).
$$
Notice that for any fixed $j$, $\tilde{g_j}^{-1}(D)$ is a disk of radius $r_g:=q^{-\gamma -\lambda_g}$.

Similarly, each disk in $\mathcal{O}$ of radius $q^{-\lambda_f}$ is bijectively mapped by $\tilde{f}$ onto $\mathcal{O}$. So $\tilde{f}^{-1}(D)$
is a union of $q^{\lambda_f}$ disks of radius $r_f:= q^{-\gamma-\lambda_f}$, each of which is contained  in one disk of radius $q^{-\lambda_f}$.
Now notice that each $\tilde{g_j}^{-1}(D)$ is decomposed into $r_g/q^{-\lambda_f}$, namely $q^{\lambda_f -\lambda_g -\gamma}$, disks of radius $q^{-\lambda_f}$.
Thus
$$
   \mu(\tilde{f}^{-1}(D)\cap \tilde{g}^{-1}(D)) = q^{\lambda_g}\cdot q^{\lambda_f -\lambda_g -\gamma} \cdot q^{-\gamma-\lambda_f}=\mu(D)^2.
$$
\end{proof}

The following law of large number due to Davenport-Erd\"os-LeVeque will be needed.

\begin{lemma}[Davenport-Erd\"os-LeVeque 1963 \cite{DEL1963}]\label{thm-DEL}
Assume that $(Y_n)$ is a sequence of random variables defined on some probability space such that $\Vert Y_n\Vert_{\infty}=O(1)$.
Let
\[ X_{N}:=\frac{Y_0+\cdots+Y_{N-1}}{N}.\] 
We have $\lim_{N\to \infty}X_N=0$ almost surely, under the following condition
  \begin{equation}\label{eq:DEL}
  \sum_{N=1}^{\infty} \frac{\mathbb{E}(|X_{N}|^2)}{N}<\infty.
  \end{equation}
\end{lemma}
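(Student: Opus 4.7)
The plan is to convert the $L^{2}$-summability hypothesis into a pathwise statement by Tonelli's theorem, and then exploit the uniform bound on $(Y_{n})$ to show that the averages $X_{N}$ vary slowly in $N$, so that any persistent departure of $X_{N}$ from $0$ would contradict the pathwise summability of $|X_{N}|^{2}/N$.

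First I would apply Tonelli to the assumption to obtain
$$\mathbb{E}\Bigl(\sum_{N=1}^{\infty}\frac{|X_{N}|^{2}}{N}\Bigr)<\infty,\qquad\text{hence}\qquad \sum_{N=1}^{\infty}\frac{|X_{N}|^{2}}{N}<\infty \quad\text{almost surely.}$$
Next, letting $C$ be a uniform bound for $\|Y_{n}\|_{\infty}$, the identity
$$X_{N+1}-X_{N}=\frac{Y_{N}-X_{N}}{N+1}$$
gives $|X_{N+1}-X_{N}|\le 2C/(N+1)$, and telescoping then yields the slow-variation bound $|X_{M}-X_{N}|\le 2C\log(M/N)+O(1/N)$ for $N\le M$. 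In particular, whenever $|X_{N}|\ge\epsilon$ one has $|X_{M}|\ge 3\epsilon/4$ on the entire geometric window $[N,\alpha N]$ with $\alpha:=\exp(\epsilon/(8C))$.

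I would then suppose, toward a contradiction, that $X_{N}\not\to 0$ with positive probability. Rationalizing, we may fix $\epsilon>0$ with $\mathbb{P}(A_{\epsilon})>0$ where $A_{\epsilon}:=\{|X_{N}|\ge\epsilon \text{ for infinitely many } N\}$. Define inductively the (measurable) stopping times $T_{1}=\inf\{N:|X_{N}|\ge\epsilon\}$ and $T_{k+1}=\inf\{N>\alpha T_{k}:|X_{N}|\ge\epsilon\}$, all finite on $A_{\epsilon}$. The windows $[T_{k},\alpha T_{k}]$ are disjoint by construction, and the slow-variation estimate guarantees that each contributes at least a fixed positive constant $c(\epsilon,C)>0$ to $\sum_{N}|X_{N}|^{2}/N$. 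Summing over $k$ shows that this pathwise sum is $+\infty$ on $A_{\epsilon}$, contradicting the first step.

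The step that needs the most care will be this last paragraph: the subsequence of bad indices must be chosen in a \emph{measurable} way (the stopping-time definition handles this), and the geometric windows must be genuinely disjoint so that their contributions to the pathwise series add rather than overlap (which is built into the lower bound $N>\alpha T_{k}$ in the definition of $T_{k+1}$). The remainder is a routine estimate of averages.
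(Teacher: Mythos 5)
The paper does not prove this lemma; it is cited directly from Davenport--Erd\"os--LeVeque \cite{DEL1963} and used as a black box in the proof of Theorem \ref{thm-main}. Your proposal is a self-contained proof, and the argument is correct; it is essentially the classical route: use Tonelli to pass from the $L^2$-summability hypothesis to pathwise summability of $|X_N|^2/N$, use the uniform bound on $(Y_n)$ to show the Ces\`aro averages vary slowly (increments $O(1/N)$, hence variation $O(\log(M/N))$ over $[N,M]$), and then observe that persistent excursions of size $\epsilon$ force divergent contributions to the pathwise series over disjoint geometric windows.

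Two small points are worth making precise when you write this out. First, the per-window lower bound $\sum_{N=T_k}^{\lfloor \alpha T_k\rfloor}|X_N|^2/N\ge \tfrac{9\epsilon^2}{16}\log\alpha$ requires the window $[T_k,\alpha T_k]$ to contain at least two integers, i.e.\ $T_k\ge 1/(\alpha-1)$; since $T_{k+1}>\alpha T_k$ with $\alpha>1$ forces $T_k\to\infty$, this holds for all but finitely many $k$, which is enough to make $\sum_N|X_N|^2/N=\infty$ on $A_\epsilon$. For the inner sum one uses
\[
\sum_{N=T_k}^{\lfloor\alpha T_k\rfloor}\frac{1}{N}\;\ge\;\int_{T_k}^{\lfloor\alpha T_k\rfloor+1}\frac{dx}{x}\;\ge\;\log\alpha .
\]
Second, the slow-variation estimate is simply $|X_M-X_N|\le 2C\sum_{j=N+1}^{M}1/j\le 2C\log(M/N)$ for $N\le M$; the extra $O(1/N)$ term is unnecessary. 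With these details filled in, your proof is complete and measurable at every step, and it reproduces the standard argument behind the Davenport--Erd\"os--LeVeque theorem.
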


An elementary inequality will also be needed.

\begin{lemma}\label{lem-sum}
Let $n_1, n_2, n_3, \cdots, n_s$ be positive integers. For any $1\leq m \leq s$, we have 
\[ \sum _{\substack{1\leq i<j\leq s\\ j-i\leq m-1}}n_i n_j \leq  (m-1) \sum_{i=1}^{s}n_{i}^2.\]
\end{lemma}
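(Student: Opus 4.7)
The plan is to apply the AM--GM inequality termwise: for each pair $(i,j)$ appearing in the sum,
\[
n_i n_j \le \tfrac{1}{2}(n_i^2 + n_j^2).
\]
Summing over all admissible pairs gives
\[
\sum_{\substack{1\le i<j\le s\\ j-i\le m-1}} n_i n_j \le \tfrac{1}{2}\sum_{\substack{1\le i<j\le s\\ j-i\le m-1}} (n_i^2 + n_j^2),
\]
so the entire task reduces to a combinatorial counting argument: for each fixed index $k$, I must bound the number of admissible pairs containing $k$.

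Next I would count, for a fixed $k \in \{1, \dots, s\}$, how many pairs $(i,j)$ with $i<j$ and $j-i \le m-1$ satisfy $k \in \{i,j\}$. Splitting by cases, when $i=k$ the other index $j$ lies in $\{k+1, \ldots, \min(k+m-1,s)\}$, giving at most $m-1$ possibilities; when $j=k$, symmetrically $i$ lies in $\{\max(1,k-m+1), \ldots, k-1\}$, again contributing at most $m-1$ possibilities. Hence $n_k^2$ appears at most $2(m-1)$ times in $\sum (n_i^2 + n_j^2)$.

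Combining the two steps,
\[
\sum_{\substack{1\le i<j\le s\\ j-i\le m-1}} n_i n_j \le \tfrac{1}{2} \cdot 2(m-1) \sum_{k=1}^{s} n_k^2 = (m-1)\sum_{k=1}^{s} n_k^2,
\]
which is the desired inequality. There is no real obstacle here: the argument is a one-line AM--GM bound followed by a trivial double-count of how often each index participates in a near-diagonal pair. The only mild subtlety is remembering to handle boundary indices (where $k$ is close to $1$ or $s$), but the count $2(m-1)$ is a crude upper bound that already absorbs these edge effects.
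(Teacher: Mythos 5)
Your proof is correct and is essentially identical to the paper's: both apply $n_i n_j \le \tfrac{1}{2}(n_i^2 + n_j^2)$ and then double-count, observing that each fixed index $k$ participates in at most $m-1$ admissible pairs as the smaller index and at most $m-1$ as the larger one, for a total coefficient of at most $2(m-1)$ on each $n_k^2$.
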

\begin{proof}
The proof is elementary. We may assume that $1 < m \le s$ (the case $m=1$ being trivial). 
By the Cauchy inequality,
\[
\sum_{\substack{1 \le i < j \le s \\ j-i \le m-1}} n_i n_j 
\;\le\;
\frac{1}{2} 
\sum_{\substack{1 \le i < j \le s \\ j-i \le m-1}} (n_i^2 + n_j^2).
\]

Moreover,
\begin{align*}
\sum_{\substack{1 \le i < j \le s \\ j-i \le m-1}} (n_i^2 + n_j^2)
&= 
\sum_{i=1}^{s} 
\left( 
\sum_{\substack{i<j\le s \\ j \le i+m-1}} 1 
\right) n_i^2
\;+\;
\sum_{j=1}^{s} 
\left(
\sum_{\substack{1 \le i < j \\ i \ge j-m+1}} 1
\right) n_j^2 \\[0.4em]
&=
\sum_{i=1}^{s}
\bigl( 
\min\{s,\, i+m-1\} - i 
\;+\;
i - \max\{1,\, i-m+1\}
\bigr) n_i^2 \\[0.4em]
&\le
\sum_{i=1}^{s}
\bigl( (i+m-1) - (i-m+1) \bigr) n_i^2 \\[0.4em]
&=
\sum_{i=1}^{s} 2(m-1)\, n_i^2.
\end{align*}
\end{proof}

\begin{proof}[Proof of Theorem \ref{thm-main}]
Recall that $\Omega$ is assumed to be a disk in $\mathcal{F}$. Without loss of generality, we assume that $\Omega=\mathcal{O}$. Otherwise, take a bijective linear map $T:\mathcal{O} \to \Omega$.
That $(\tilde{f}_n(x))$ is uniformly distributed for almost all $x \in \Omega$ is equivalent to that $(\tilde{f}_n(T(x'))$ is uniformly distributed for almost all $x' \in \mathcal{O}$.

Let $D$ be an arbitrary disk in $\mathcal{O}$. Consider the centered random variables $$
Y_n(\omega):=1_{D}(\tilde{f}_n(\omega))-\mu(D).
$$
We have $\|Y_n\|_\infty \le 1$, and $\mathbb{E}Y_n=0$ by the $\tilde{f}$-invariance of the Haar measure (see Lemma \ref{lem-1}).
What we need to prove is that for any fixed disk $D$ in $\mathcal{O}$,
\begin{equation}\label{equ-average}
    \lim_{N\to \infty} X_N(\omega)=0
\end{equation}
for a.e. $\omega\in \mathcal{O}$, where 
$$
X_N=\frac{1}{N}\sum_{n=0}^{N-1}Y_n.
$$
We will prove it by Davenport-Erdos-LeVeque's law of large numbers, by checking the condition \eqref{eq:DEL}.
Note that there are only countably many disks in $\mathcal{O}$.
Hence, if for any disk $D$ in $\mathcal{O}$, \eqref{equ-average} is true for a.e. $\omega\in \mathcal{O}$, then for a.e. $\omega\in \mathcal{O}$ and any disk $D$ in $\mathcal{O}$, \eqref{equ-average} is true, which means the uniform distribution.

Fix the disk $D$. Assume that $\mu(D)=q^{-k}$. By Lemma \ref{lem-1} and Lemma \ref{lem-2}, we get
\begin{align}\label{eq-cor}
\mathbb{E}(Y_nY_m)=0, \hbox{ if } |\lambda_m-\lambda_n|\geq k.
\end{align}
Now we estimate the second moment
  \[
   \mathbb{E}(X_N^2)=\frac{1}{N^2}\sum_{0\leq m,n\leq N-1} \mathbb{E}(Y_m Y_n).
   \]
Let 
$$
   \Lambda_N:=\{\lambda_n: 0\leq n \leq N-1\}:=\{\gamma_1, \gamma_2, \cdots, \gamma_s\}.
$$ 
Consider the partition
$$
\{1,2,\cdots, N\} = \bigcup_{j=1}^s A_{N,j} \ \ {\rm with} \ \ A_{N,j} =\{0\le n\le N-1: \lambda_n =\gamma_j\}.
$$
Then
$$
    \mathbb{E}X_N^2 = \frac{1}{N^2}\sum_{i=1}^s\sum_{j=1}^s  \mathbb{E}\left(\sum_{\lambda_n \in A_{N,i}} Y_n\right) \left(\sum_{\lambda_m \in A_{N,j}} Y_m\right)
$$
For $1\leq j\leq s$, let $n_j=\#A_{N,j}$.  
By \eqref{eq-cor} and $\|Y_n\|_\infty \le 1$, we get
\begin{equation}\label{eq:1}
\mathbb{E}(X_N^2) \leq \frac{1}{N^2} \sum_{|\gamma_i-\gamma_j|< k}n_in_j.
\end{equation}
We may assume that $\gamma_1<\gamma_2<\cdots<\gamma_s$. Then by Lemma \ref{lem-sum},
\begin{equation}\label{eq:2}
\sum_{|\gamma_i-\gamma_j|< k}n_in_j
\le \sum_{i=1}^{s}n_{i}^2 + 2\sum_{\substack{1 \le i < j \le s \\ j-i \le k-1} }n_in_j
\le (2k-1) \sum_{i=1}^{s}n_{i}^2.
\end{equation}
 But $\sum_{i=1}^{s}n_{i}^2=|K_N|$. This, together with \eqref{eq:1} and \eqref{eq:2}, implies the condition \eqref{eq:DEL}.
\end{proof}

\section{Proof of Theorem \ref{thm-koksma} and Theorem \ref{thm-char=p}}

In order to prove Theorem \ref{thm-koksma}, we show that the maps $f_n(x)=\alpha x^n$ are locally scaling (see Lemma \ref{lem-scaling}) and the scaling ratios are  ``nearly" pairwise different (see Lemma \ref{lem-logN}). Then we use Theorem \ref{thm-main} to finish the proof.

We use similar methods to prove Theorem \ref{thm-char=p} by considering scaling maps on $\widetilde{\mathcal{S}_v}$ instead of scaling maps on $\mathcal{O}$.

\begin{lemma}\label{lem-scaling}
Let $a\in \mathcal{F}\setminus\{0\}$, $\alpha\in \mathcal{F}\setminus\{0\}$, and let $n$ be a positive integer.
\begin{itemize}
\item[{\rm(a)}] If ${\rm char}\,\mathcal{F}=0$, then  the map $f_n(x)=\alpha x^{n}$ satisfies
\[
|f(x)-f(y)|_\mathfrak{p} = |\alpha|_\mathfrak{p}\, |n|_\mathfrak{p}\, |a|_\mathfrak{p}^{\,n-1}\,|x-y|_\mathfrak{p}
\]
on the disk 
\[
D\!\left(a,\, |a|_\mathfrak{p}/q^{\,e+1}\right),
\]
where $e$ denotes the ramification index of the extension $\mathcal{F}/\Q_p$.

\item [{\rm(b)}] If ${\rm char}\,\mathcal{F}=p>0$, then the map $f_n(x)=\alpha x^{n}$ satisfies
\[
|f(x)-f(y)|_\mathfrak{p} = |\alpha|_\mathfrak{p}\,|a|_\mathfrak{p}^{\,n-p^{v_p(n)}}\,|x-y|_\mathfrak{p}^{\,p^{v_p(n)}}
\]
for all $x,y$ in the disk $D(a, |a|_\mathfrak{p}/q)$.  
In particular, if $p\nmid n$, then $f_n(x)=\alpha x^{n}$ is scaling on the disk $D(a, |a|_\mathfrak{p}/q)$ with scaling ratio 
\[
|\alpha|_\mathfrak{p}\,|a|_\mathfrak{p}^{\,n-1}.
\]
\end{itemize}
\end{lemma}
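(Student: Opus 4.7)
The plan is to handle both parts by expanding $f_n(x)-f_n(y)$ in a way that isolates a dominant term, then invoking the ultrametric inequality to read off the valuation of the whole sum from that term. In both cases the small radius of the disk ($|a|_\mathfrak{p}/q^{e+1}$ in (a), $|a|_\mathfrak{p}/q$ in (b)) is precisely what is needed to guarantee that every other term is strictly smaller.

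For (a), I would start from the Taylor expansion at $a$: writing the perturbations $x-a$ and $y-a$ and factoring $(x-y)$ out of each bracket one gets
\[
f_n(x)-f_n(y)=\alpha\sum_{j=1}^{n}\binom{n}{j}a^{n-j}\bigl[(x-a)^j-(y-a)^j\bigr].
\]
The $j=1$ term equals $\alpha n a^{n-1}(x-y)$, of $\mathfrak{p}$-adic size $|\alpha|_\mathfrak{p}|n|_\mathfrak{p}|a|_\mathfrak{p}^{n-1}|x-y|_\mathfrak{p}$, while the $j$-th term for $j\ge 2$ is bounded by $|\binom{n}{j}|_\mathfrak{p}|a|_\mathfrak{p}^{n-1}q^{-(e+1)(j-1)}|x-y|_\mathfrak{p}$ since $|x-a|_\mathfrak{p},|y-a|_\mathfrak{p}\le |a|_\mathfrak{p}/q^{e+1}$. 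The required domination is
\[
v_\mathfrak{p}\!\left(\tbinom{n}{j}\right)+(e+1)(j-1)>v_\mathfrak{p}(n)\qquad(j\ge 2).
\]
Using the identity $n\binom{n-1}{j-1}=j\binom{n}{j}$ I get $v_p(\binom{n}{j})\ge v_p(n)-v_p(j)$; since $v_\mathfrak{p}=e\,v_p$ on $\mathbb{Z}$, it then suffices to check $(e+1)(j-1)>e\,v_p(j)$ for $j\ge 2$. This follows from the elementary bound $j-1\ge v_p(j)$ (because $j\ge p^{v_p(j)}\ge v_p(j)+1$) together with $e+1>e$. The strong triangle inequality then yields the claim in (a).

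For (b), the key observation is that in characteristic $p$ the Frobenius identity gives $(u-v)^{p^\gamma}=u^{p^\gamma}-v^{p^\gamma}$. Writing $n=p^\gamma m$ with $\gamma=v_p(n)$ and $\gcd(m,p)=1$, one obtains
\[
f_n(x)-f_n(y)=\alpha\bigl((x^m)^{p^\gamma}-(y^m)^{p^\gamma}\bigr)=\alpha(x^m-y^m)^{p^\gamma},
\]
so it is enough to show that $x\mapsto x^m$ is scaling on $D(a,|a|_\mathfrak{p}/q)$ with ratio $|a|_\mathfrak{p}^{m-1}$. This is simpler than (a) because $|m|_\mathfrak{p}=1$: from the factorization $x^m-y^m=(x-y)P(x,y)$ with $P(a,a)=ma^{m-1}$, expanding each monomial of $P$ around $a$ yields $|P(x,y)-P(a,a)|_\mathfrak{p}\le|a|_\mathfrak{p}^{m-1}/q<|P(a,a)|_\mathfrak{p}$, hence $|P(x,y)|_\mathfrak{p}=|a|_\mathfrak{p}^{m-1}$. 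Collecting exponents, $(m-1)p^\gamma=n-p^{v_p(n)}$, which yields the stated identity; the scaling statement in the last clause is the special case $\gamma=0$.

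The main obstacle is the arithmetic check in (a): the inflation factor $q^{e+1}$ in the radius is tuned exactly to absorb the loss $|n|_\mathfrak{p}=q^{-e v_p(n)}$, and proving that every $j\ge 2$ contribution is strictly dominated relies on the combination of the $\mathfrak{p}$-adic estimate $v_p(\binom{n}{j})\ge v_p(n)-v_p(j)$ with the elementary inequality $j-1\ge v_p(j)$. Once these two ingredients are in hand, both parts reduce to routine ultrametric bookkeeping.
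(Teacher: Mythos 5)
Your proof is correct and takes essentially the same route as the paper: in (a), a binomial expansion dominated by the linear term via the ultrametric inequality, using $v_p\bigl(\tbinom{n}{j}\bigr)\ge v_p(n)-v_p(j)$ together with $v_p(j)\le j-1$; in (b), the Frobenius identity reduces matters to the $p\nmid m$ case. The only cosmetic difference is that you expand both $x^n$ and $y^n$ around $a$, whereas the paper expands $x^n$ around $y$ and uses $|y|_\mathfrak{p}=|a|_\mathfrak{p}$ --- the arithmetic bookkeeping is identical.
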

\begin{proof}
\textbf{(a)  ${\rm char}\mathcal{F}=0$.}
For $x,y\in D(a, |a|_\mathfrak{p}/q^{\,e+1})$, we have
\[
|x-y|_\mathfrak{p}
   \leq \max\{|x-a|_\mathfrak{p},\,|y-a|_\mathfrak{p}\}
   \leq |a|_\mathfrak{p}/q^{\,e+1},
   \qquad 
|x|_\mathfrak{p}=|y|_\mathfrak{p}=|a|_\mathfrak{p}.
\]
By direct computation, we have  
\begin{align*}
|f(x)-f(y)|_\mathfrak{p}
 &= |\alpha|_\mathfrak{p}\,|x^{n}-y^{n}|_\mathfrak{p} \\
 &= |\alpha|_\mathfrak{p}
    \left| n y^{\,n-1}(x-y)
        + \sum_{i=2}^n \binom{n}{i} y^{\,n-i}(x-y)^i
    \right|_\mathfrak{p} 
  \end{align*}

For $2\le i\le n$, using $v_\mathfrak{p}(k)=e\,v_p(k)$ for any integer $k$, we compute
\begin{align*}
v_\mathfrak{p}\left(\!\binom{n}{i}\!\right)
&= e\,v_p\left(\!\binom{n}{i}\!\right)
 = e\,v_p(n) - e\,v_p(i) + e\,v_p\left(\!\binom{n-1}{i-1}\!\right) \\
&\ge e\,v_p(n) - e\,v_p(i)
 \ge e\,v_p(n) - e(i-1) \\
&= v_\mathfrak{p}(n)-e(i-1)  
  > v_\mathfrak{p}(n)-(e+1)(i-1).
\end{align*}
Thus
\[
\frac{|n y^{\,n-1}(x-y)|_\mathfrak{p}}
     {\bigl|\binom{n}{i} y^{\,n-i}(x-y)^i\bigr|_\mathfrak{p}}
   = \frac{|n|_\mathfrak{p}}{\bigl|\binom{n}{i}\bigr|_\mathfrak{p}}
     \left|\frac{y}{x-y}\right|_\mathfrak{p}^{\,i-1}
   >1.
\]
By the strong triangle inequality, we have
\begin{align*}
|f(x)-f(y)|_\mathfrak{p}
&= |\alpha|_\mathfrak{p}\,|n y^{\,n-1}(x-y)|_\mathfrak{p} \\
 &= |\alpha|_\mathfrak{p}\,|n|_\mathfrak{p}\,|a|_\mathfrak{p}^{\,n-1}\,|x-y|_\mathfrak{p}.
\end{align*}

\medskip
\textbf{(b)  ${\rm char}\mathcal{F}=p$.}
For $x,y\in D(a, |a|_\mathfrak{p}/q)$, we again have
\[
|x-y|_\mathfrak{p}
   \le |a|_\mathfrak{p}/q,
   \qquad 
|x|_\mathfrak{p}=|y|_\mathfrak{p}=|a|_\mathfrak{p}.
\]

\emph{Case 1: $p\nmid n$.}  
Since $|\binom{n}{i}|_\mathfrak{p}\in\{0,1\}$ for $i\ge2$, the same argument as in (a) gives
\begin{align*}
|f(x)-f(y)|_\mathfrak{p}
 &=|\alpha|_\mathfrak{p}\,|x^{n}-y^{n}|_\mathfrak{p} \\
 &=|\alpha|_\mathfrak{p}\,|n y^{\,n-1}(x-y)|_\mathfrak{p} \\
 &=|\alpha|_\mathfrak{p}\,|a|_\mathfrak{p}^{\,n-1}\,|x-y|_\mathfrak{p}.
\end{align*}

\emph{Case 2: $p\mid n$.}  
Write $n=n'p^{v}$ with $p\nmid n'$. Then
\begin{align*}
|f(x)-f(y)|_\mathfrak{p}
 &=|\alpha|_\mathfrak{p}\,|x^{n'p^{v}}-y^{n'p^{v}}|_\mathfrak{p} \\
 &=|\alpha|_\mathfrak{p}\,|(x^{n'}-y^{n'})^{p^{v}}|_\mathfrak{p} \\
 &=|\alpha|_\mathfrak{p}\,|a|_\mathfrak{p}^{\,n-p^{v}}\,|x-y|_\mathfrak{p}^{\,p^{v}}.
\end{align*}
\end{proof}

Now we assume ${\rm char}\mathcal{F}=0$.
For positive integers $k, n, N,\gamma$ with $k\leq n\leq N+k-1$, let
\[\Lambda^{\gamma}_{n,k,N}=\{k\leq m\leq N+k-1: m\gamma-v_\mathfrak{p}(m)=n\gamma-v_\mathfrak{p}(n) \}.\]
Now, we provide a crucial estimate for the cardinality of $\Lambda^{\gamma}_{n,k, N}$.
\begin{lemma}\label{lem-logN}
For positive integers $k, n, N,\gamma$ with $k\leq n\leq N+k-1$, we have
\[\#\Lambda^{\gamma}_{n, k, N}\leq \frac{e \log_p (N+k)}{\gamma} +1.\]
\end{lemma}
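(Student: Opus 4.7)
The plan is to convert the defining relation of $\Lambda^{\gamma}_{n,k,N}$ into a statement about the ordinary $p$-adic valuation of integers, and then exploit the logarithmic growth of $v_p$. Using the identity $v_\mathfrak{p}(m) = e\, v_p(m)$, which holds for any integer $m$ viewed as an element of $\mathcal{F}$, I would rewrite the membership condition $m\gamma - v_\mathfrak{p}(m) = n\gamma - v_\mathfrak{p}(n)$ in the form
$$(m-n)\gamma \;=\; e\bigl(v_p(m) - v_p(n)\bigr).$$
Subtracting this identity for two elements $m_1, m_2 \in \Lambda^{\gamma}_{n,k,N}$ eliminates the reference point $n$ and yields
$$(m_2 - m_1)\gamma \;=\; e\bigl(v_p(m_2) - v_p(m_1)\bigr).$$
This is the key observation: since $\gamma \ge 1 > 0$, the function $m \mapsto v_p(m)$ must be \emph{strictly monotone} on $\Lambda^{\gamma}_{n,k,N}$, with $v_p$ strictly increasing in $m$.

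The second step is a range estimate. Listing the elements of $\Lambda^{\gamma}_{n,k,N}$ in increasing order as $m_1 < m_2 < \cdots < m_s$, I would apply the displayed identity to the extreme pair $(m_1, m_s)$ and use $v_p(m_1) \ge 0$ together with the elementary bound $v_p(m_s) \le \log_p m_s \le \log_p(N+k)$. This gives
$$(m_s - m_1)\,\gamma \;\le\; e\, v_p(m_s) \;\le\; e\log_p(N+k),$$
so $m_s - m_1 \le e\log_p(N+k)/\gamma$. Since the $s$ distinct integers $m_1,\ldots, m_s$ all lie in the integer interval $[m_1,m_s]$, the cardinality bound
$$s \;\le\; m_s - m_1 + 1 \;\le\; \frac{e\log_p(N+k)}{\gamma} + 1$$
follows at once.

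There is no substantive obstacle here: once the strict monotonicity of $v_p$ along $\Lambda^{\gamma}_{n,k,N}$ is noticed, the estimate is immediate. The only mild point worth checking is that the indices lie in $[k, N+k-1]$, so that $v_p(m_s) \le \log_p(N+k-1) \le \log_p(N+k)$; this is trivial. Conceptually, the lemma simply quantifies the fact that fixing the value of $m\gamma - v_\mathfrak{p}(m)$ forces $m$ into a sparse arithmetic pattern whose gaps grow like the spacing of successive $p$-adic valuations, so only logarithmically many $m \le N+k$ can share it.
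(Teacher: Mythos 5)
Your proof is correct and follows essentially the same path as the paper: both rely on the identity $v_\mathfrak{p}(m)=e\,v_p(m)$ to turn membership in $\Lambda^{\gamma}_{n,k,N}$ into the linear relation $(m-n)\gamma = v_\mathfrak{p}(m)-v_\mathfrak{p}(n)$, then bound $v_p$ by $\log_p(N+k)$. The only cosmetic difference is the final counting step: the paper observes that the $s$ values $v_\mathfrak{p}(m_j)$ lie in an interval of length $e\log_p(N+k)$ with pairwise gaps at least $\gamma$, whereas you bound $m_s-m_1$ and count integers in $[m_1,m_s]$; via the linear relation these are the same estimate, so the two arguments are equivalent.
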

\begin{proof}
Assume that 
\[
\Lambda^{\gamma}_{n,k,N}=\{m_1, m_2, \ldots, m_s\}
\quad\text{with}\quad
m_1 < m_2 < \cdots < m_s.
\]
For any $m \in \Lambda^{\gamma}_{n,k,N}$, we have
\[
v_{\mathfrak{p}}(m)
= v_{\mathfrak{p}}(n) + (m-n)\gamma.
\]
Hence,
\[
v_{\mathfrak{p}}(m_1)
< v_{\mathfrak{p}}(m_2)
< \cdots <
v_{\mathfrak{p}}(m_s),
\qquad
|v_{\mathfrak{p}}(m_i)-v_{\mathfrak{p}}(m_j)| \ge \gamma
\quad (i \ne j).
\]

Since 
\[
0 \le v_{\mathfrak{p}}(m_j) = e\, v_{p}(m_j)
< e \log_p (N+k),
\]
the values $v_{\mathfrak{p}}(m_j)$ lie in an interval of length $e\log_p(N+k)$.  
Because they are strictly increasing with pairwise gaps at least $\gamma$, we obtain
\[
s \le \frac{e\log_p(N+k)}{\gamma} + 1.
\]
\end{proof}

\begin{proof}[Proof of Theorem \ref{thm-koksma}]
Fix any $a\in \mathcal{F}$ with $|a|_\mathfrak{p}>1$ and set $f_n(x)=\alpha x^n$.

\textbf{(a)  ${\rm char}\mathcal{F}=0$.}  
Consider the disk
\[
\Omega := D(a, |a|_\mathfrak{p}/q^{\,e+1}).
\]
Let $d = e \log_p q$ and choose a positive integer $n_0$ such that for all $n\ge n_0$, 
\[
|\alpha|_\mathfrak{p} n^{-d} q^{\,n-(e+1)} \ge 1.
\]
By Lemma \ref{lem-scaling}(a), the map $f_n$ is scaling on $\Omega$ with scaling ratio 
\[
q^{\lambda_n}:=|\alpha|_\mathfrak{p} |n|_\mathfrak{p} |a|_\mathfrak{p}^{\,n-1}.
\]
For $n \ge n_0$,
\[
|\alpha|_\mathfrak{p} |n|_\mathfrak{p} |a|_\mathfrak{p}^{\,n-1} \, {\rm diam}(\Omega)
= |\alpha|_\mathfrak{p} q^{-(e+1)} |n|_\mathfrak{p} |a|_\mathfrak{p}^{\,n}
\ge |\alpha|_\mathfrak{p} n^{-d} q^{\,n-(e+1)} \ge 1.
\]

By Lemma \ref{lem-logN}, for $n_0 \le n \le N+n_0-1$,
\[
\#\Bigl\{ m : n_0 \le m \le N+n_0-1, \ |m|_\mathfrak{p} |a|_\mathfrak{p}^{\,m-1} = |n|_\mathfrak{p} |a|_\mathfrak{p}^{\,n-1} \Bigr\}
\le \frac{e \log_p (N+n_0)}{\log_q |a|_\mathfrak{p}} + 1.
\]
Hence,
\[
\#\Bigl\{(n,m): n_0 \le n,m \le N+n_0-1, \ \lambda_n = \lambda_m \Bigr\}
\le N \left( \frac{e \log_p (N+n_0)}{\log_q |a|_\mathfrak{p}} + 1 \right).
\]

Applying Theorem \ref{thm-main}, the sequence $([\alpha x^n])_{n\ge n_0}$ is uniformly distributed in $\mathcal{O}$ for almost all $x\in \Omega$, which implies that $([\alpha x^n])_{n\ge 1}$ is also uniformly distributed in $\mathcal{O}$.  
Since the set $\{ x\in\mathcal{F} : |x|_\mathfrak{p}>1 \}$ is covered by countably many such disks $\Omega$, it follows that $([\alpha x^n])$ is uniformly distributed in $\mathcal{O}$ for almost all $x$ with $|x|_\mathfrak{p}>1$.

\medskip
\textbf{(b)  ${\rm char}\mathcal{F}=p$.}  
By Lemma \ref{lem-scaling}(b), the subsequence $(f_n)_{p \nmid n}$ consists of locally scaling maps on the disk 
\[
\Omega := D(a, |a|_\mathfrak{p}/q)
\]
with strictly increasing scaling ratios $|\alpha|_\mathfrak{p} |a|_\mathfrak{p}^{\,n-1}$.  

By Theorem \ref{thm-main}, the sequence $([\alpha x^n])_{n\ge 1,\,p\nmid n}$ is uniformly distributed in $\mathcal{O}$ for almost all $x\in \Omega$.  
Since $\{ x\in\mathcal{F} : |x|_\mathfrak{p}>1 \}$ is covered by countably many such disks, the result follows for almost all $x$ with $|x|_\mathfrak{p}>1$.
\end{proof}

In order to proof Theorem \ref{thm-char=p}, we split the sequence $([x^n])_{n\geq 1}$ into some subsequences based on $v_p(n)$. For $k\geq 1$, we consider the distribution of the subsequence $([x^n])_{p^k\parallel n}$ in $\widetilde{\mathcal{S}_k}$.

\begin{proof}[Proof of Theorem \ref{thm-char=p}]
\textbf{(a)}
Suppose first that $p^k \mid n$. For any $x=\sum_{i=v_\mathfrak{p}(x)}^\infty x_i\pi^i$, we have
\[
x^n=\left(\sum_{i=v_\mathfrak{p}(x)}^\infty x_i\pi^{i}\right)^{p^k\cdot n/p^k}
   =\left(\sum_{i=v_\mathfrak{p}(x)}^\infty x_i^{p^k}\pi^{ip^k}\right)^{n/p^k}
   \in \mathcal{S}_k.
\]
Thus $[x^n]\in\widetilde{\mathcal{S}_k}$ for all $x\in\mathcal{F}$ whenever $p^k\mid n$.

In particular, for $p\mid n$ and any $x\in\mathcal{F}$ we have $[x^n]\in\widetilde{\mathcal{S}_1}$.  
Since $\widetilde{\mathcal{S}_1}$ is compact of Haar measure $0$ in $\mathcal{O}$, given $0<\epsilon<1/p$ there exist finitely many open disks $\{D_j\}_{1\le j\le J}$ covering $\widetilde{\mathcal{S}_1}$ such that $\sum_{j=1}^J \mu(D_j)<\epsilon$.
For any fixed $x$ with $|x|_\mathfrak{p}>1$,
\begin{align*}
\lim_{N\to\infty} \frac{1}{N}\#\{1\le n\le N:[x^n]\in \cup_j D_j\}
&\ge \lim_{N\to\infty} \frac{1}{N}\#\{1\le n\le N:p\nmid n\}  \\
&=1/p>\epsilon > \mu\left(\bigcup_j D_j\right).
\end{align*}
Hence $([x^n])_{n\ge1}$ is not Haar-u.d.\ in $\mathcal{O}$ for any $x$ with $|x|_\mathfrak{p}>1$.

\textbf{(b)}
Define
\[
g_k:\mathcal{F}\to \mathcal{S}_k,\qquad g_k(x)=x^{p^k}.
\]
We claim that $g_k$ is an isomorphism of measure spaces $(\mathcal{F},\mu)$ and $(\mathcal{S}_k,\mu_k)$.

Since $x\mapsto x^{p^k}$ is an automorphism of $\mathbb{F}_q$, $g_k$ is bijective.  
For any ball $b+\pi^l\mathcal{O}\subset\mathcal{F}$,
\[
g_k(b+\pi^l\mathcal{O})=b^{p^k}+\pi^{lp^k}\widetilde{\mathcal{S}_k},
\]
and
\[
\mu(b+\pi^l\mathcal{O})=q^{-l}
   =\mu_k\big(b^{p^k}+\pi^{lp^k}\widetilde{\mathcal{S}_k}\big).
\]
Thus the claim holds.

If $p\nmid n'$, then $f_{n'}(x)=x^{n'}$ is locally scaling on the disk
\[
\Omega=D(a,|a|_\mathfrak{p}/q^{p^k}) \subset \mathcal{S}_k
\quad (0\neq a\in\mathcal{S}_k)
\]
with scaling ratio $|a|_\mathfrak{p}^{n'-1}$ by Lemma \ref{lem-scaling}(b).  
Exactly as in Theorems \ref{thm-main} and \ref{thm-koksma}(b), one obtains that
\[
([x^{n'}])_{n'\ge1,\; p\nmid n'} 
\quad\text{is $\mu_k$-u.d.\ in }\widetilde{\mathcal{S}_k}
\]
for $\mu_k$-a.e.\ $x\in \mathcal{S}_k$ with $|x|_\mathfrak{p}>1$.
Since $x^n=(g_k(x))^{\,n/p^k}$ whenever $p^k\parallel n$, the claim implies that
\[
([x^n])_{n\ge1,\; p^k\parallel n}
\quad\text{is $\mu_k$-u.d.\ in }\widetilde{\mathcal{S}_k}
\]
for $\mu$-a.e.\ $x\in\mathcal{F}$ with $|x|_\mathfrak{p}>1$.

\textbf{(c)}
By Theorem \ref{thm-koksma}(b) and part (b) above, for a.e.\ $x$ with $|x|_\mathfrak{p}>1$ the following hold:
\begin{equation}\label{eq:thm1.4(1)}
\frac{1}{N(1-1/p)}\!\!\sum_{\substack{1\le n\le N\\ p\nmid n}}\!\delta_{[x^n]}
   \xrightarrow[N\to\infty]{{\rm w}^*}\mu,
\quad
\frac{1}{N(1-1/p)p^{-k}}\!\!\sum_{\substack{1\le n\le N\\ p^k\parallel n}}\!\delta_{[x^n]}
   \xrightarrow[N\to\infty]{{\rm w}^*}\mu_k, \ \forall k\geq 1.
\end{equation}
(Here $\mu_k$ is viewed as a measure on $\mathcal{O}$ via $\mu_k(D)=\mu_k(D\cap\widetilde{\mathcal{S}_k})$.)

We must show that for a.e.\ such $x$,
\begin{equation}\label{eq:thm1.4(3)}
\frac{1}{N}\sum_{1\le n\le N}\delta_{[x^n]}
   \xrightarrow[N\to\infty]{{\rm w}^*}
   \left(1-\frac{1}{p}\right)\!\left(\mu+\sum_{k=1}^\infty p^{-k}\mu_k\right).
\end{equation}
Since
\[
\frac{1}{N}\sum_{1\le n\le N}\delta_{[x^n]}
   =\frac{1}{N}\sum_{k=0}^\infty
      \sum_{\substack{1\le n\le N\\ p^k\parallel n}}\!\delta_{[x^n]},
\]
it follows from \eqref{eq:thm1.4(1)} that for any fixed $K$,
\[
\frac{1}{N}\sum_{k=0}^{K}
   \sum_{\substack{1\le n\le N\\ p^k\parallel n}}\!\delta_{[x^n]}
   \xrightarrow[N\to\infty]{{\rm w}^*}
   \left(1-\frac{1}{p}\right)\!\left(\mu+\sum_{k=1}^K p^{-k}\mu_k\right).
\]
Meanwhile,
\[
\left\|\frac{1}{N}\sum_{k=K+1}^{\infty}
   \sum_{\substack{1\le n\le N\\ p^k\parallel n}}
   \delta_{[x^n]}\right\|
   \le \frac{1}{N}\cdot \frac{N}{p^{K+1}}
   =p^{-(K+1)} \to 0
   \quad(K\to\infty).
\]
Thus \eqref{eq:thm1.4(3)} follows.
\end{proof}

\begin{remark}
If we consider $[\alpha x^n]$ for an arbitrary $\alpha$, then the question of whether the sequence $\bigl([\alpha x^n]\bigr)$ is u.d.\ in $\mathcal{O}$ remains open.
For instance, define
\[
\mathcal{S}' := \{[\alpha y] : y \in \mathcal{S}_1\}.
\]
The set $\mathcal{S}'$ may even be dense in $\mathcal{O}$.
If $\alpha$ is ``good'' in the sense that the measure of the closure $\overline{\mathcal{S}'}$ is sufficiently small, then the sequence $\bigl([\alpha x^n]\bigr)$ fails to be u.d.\ in $\mathcal{O}$ for every $x$.
\end{remark}

\section{The exceptional set of \texorpdfstring{$([f_n(x)])$}. of increasing scaling ratios}

The idea of the proof for Theorem \ref{thm-fulldimscaling} is that the exceptional set of points $x$, for which the first non-Archimedean digit of $[f_n(x)]$ fails to be uniformly distributed has a nice structure and  
has full Hausdorff dimension under fairly general assumptions. 
To prove Theorem \ref{thm-fulldimscaling}, we begin with the following lemma, which is a variant of the main theorem in \cite{Egg1949}.

Let $\Sigma_m^{\N}=\{0,1,\ldots,m-1\}^{\N}$ be a symbolic space with metric $d(x,y)=m^{-\min\{i:x_i \neq y_i\}}$. Let $\Lambda=(\lambda_n)$ be a sequence of strictly increasing positive integers.
The upper density of $\Lambda$ in $\N$ is
\[\rho:= \limsup_{n\rightarrow\infty} \frac{\#(\Lambda \cap [1, n])}{n}
= \limsup_{n\rightarrow\infty} \frac{n}{\lambda_n}.\]
Let $P=(p_0,p_1,\ldots,p_{m-1})$ be a given probability vector. We define the set of points having $P$ as frequency vector along $\Lambda$ by
\[F_\Lambda(P):= \left\{x\in\Sigma_m^{\N}: \lim_{n\rightarrow\infty} \frac{\#\{1\leq i\leq n: x_{\lambda_i}=j\}}{n}=p_j \hbox{ for } j=0,1,\ldots,m-1\right\}.\]
Notice that  $x_n$'s are free for $n \not \in \Lambda$ if $x\in F_\Lambda$.

\begin{lemma}\label{lem-nnormal}
Under the above assumption, we have
\begin{equation}\label{eq:Hausdorff}
\dim_{\mathcal{H}}F_\Lambda(P)=(1-\rho) -\rho \sum_{j=0}^{m-1}p_j\log_m p_j.
\end{equation}In particular,
\[\dim_{\mathcal{H}} \{x\in\Sigma_m^{\N}: (x_{\lambda_n})_{n\geq1} \hbox{ not u.d. in } \Sigma_m\}=1,\]
where $\Sigma_m$ is equipped with the uniform probability measure.
\end{lemma}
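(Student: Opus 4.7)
The plan is to establish the dimension formula by the standard two-sided strategy---a mass distribution lower bound and a Stirling-type covering upper bound---where the dimension decomposes into the free contribution $1-\rho$ coming from the coordinates with index outside $\Lambda$, and the Bernoulli/entropy contribution $-\rho\sum p_j\log_m p_j$ along $\Lambda$. Both bounds will be read off from cylinders of length $n$, and they match because the $\limsup$ defining $\rho$ turns out to be the correct extremum on both sides.

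For the lower bound I would construct the Bernoulli product measure $\nu$ on $\Sigma_m^{\N}$ whose coordinates are independent, with marginal $P$ at every position $n\in\Lambda$ and uniform marginal at every position $n\notin\Lambda$. Two applications of the strong law of large numbers give, first, $\nu(F_\Lambda(P))=1$, and second, for $\nu$-a.e.\ $x$,
\[
\log_m \nu\bigl(B(x, m^{-n})\bigr) \;=\; -(n - k_n) + k_n\sum_j p_j \log_m p_j + o(k_n),
\]
where $k_n=\#(\Lambda\cap[1,n])$. Writing $H=-\sum_j p_j\log_m p_j\in[0,1]$, the lower local dimension simplifies to $\liminf_n \bigl(1-(1-H)\,k_n/n\bigr)$. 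Since $1-H\ge 0$, this liminf is governed by $\limsup k_n/n=\rho$ and equals $1-(1-H)\rho=(1-\rho)-\rho\sum_j p_j\log_m p_j$. The mass distribution principle then yields the desired lower bound for $\dim_{\mathcal{H}}F_\Lambda(P)$.

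For the upper bound I would exhaust $F_\Lambda(P)$ by the countable family (indexed by $\varepsilon\downarrow 0$ and $N\uparrow\infty$) of sets on which the $\Lambda$-empirical frequencies stay within $\varepsilon$ of $P$ for all $n\ge N$. Each such set is covered at level $n$ by length-$n$ cylinders, whose cardinality is at most $m^{n-k_n}\cdot m^{k_n(H+o_\varepsilon(1))}$ by a standard Stirling/multinomial count: the first factor is the free choice off $\Lambda$, the second is the number of admissible symbol strings on $\Lambda$-positions. Restricting to a subsequence of $n$'s along which $k_n/n\to\rho$ (which exists by the very definition of $\limsup$), the $s$-content of this cover collapses for every $s>(1-\rho)+\rho H$, and hence $\mathcal{H}^s(F_\Lambda(P))=0$. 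The ``in particular'' statement follows by monotonicity: choosing non-uniform probability vectors $P_\varepsilon$ with $H(P_\varepsilon)\to 1$, each $F_\Lambda(P_\varepsilon)$ lies inside the non-u.d.\ exceptional set, and its dimension tends to $(1-\rho)+\rho=1$.

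The main obstacle is the careful bookkeeping forced by the $\limsup$ in $\rho$: one must verify that the same extremal behavior of $k_n/n$ simultaneously controls the lower local dimension (a liminf) and the efficiency of the covers (needed along arbitrarily fine scales). The point that makes both arguments match is the elementary fact $H\le 1$, which renders $\alpha\mapsto 1-(1-H)\alpha$ monotone decreasing so that its extremal value is attained precisely at $\alpha=\rho$. Once this observation is in place, the classical Eggleston--Billingsley machinery can be applied essentially verbatim.
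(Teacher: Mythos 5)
Your lower bound is exactly the paper's: you build the inhomogeneous Bernoulli measure $\nu$ (uniform off $\Lambda$, distribution $P$ along $\Lambda$), show $\nu(F_\Lambda(P))=1$ by the strong law, compute the lower local dimension along cylinders, and invoke the mass distribution principle. Your upper bound, however, takes a genuinely different route. The paper does not run a separate covering argument at all: it observes that for \emph{every} $y\in F_\Lambda(P)$ (not just $\nu$-a.e.\ $y$) the $\Lambda$-frequency converges to $P$ by the very definition of $F_\Lambda(P)$, so the cylinder-measure formula gives the \emph{exact} value of $\liminf_n \log_m\mu_P(D_n(y))/(-n)$ at every point of the set. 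Combined with $\mu_P(F_\Lambda(P))=1$, the two-sided Billingsley/mass-distribution lemma (the paper cites Prop.\ 2.3 of Falconer's \emph{Techniques in Fractal Geometry}) then yields both the lower and the upper bound from the same computation. You instead exhaust $F_\Lambda(P)$ by the sets where the $\Lambda$-empirical frequencies stay $\varepsilon$-close to $P$, count admissible length-$n$ cylinders via Stirling, and extract a subsequence $n_j$ with $k_{n_j}/n_j\to\rho$ to make the $s$-content collapse. Both arguments are sound; the decisive point in each is the same one you highlight, namely $0\le 1-H\le 1$ so that $1-(1-H)k_n/n$ is monotone decreasing in $k_n/n$, which makes the extremum occur precisely at $\limsup k_n/n=\rho$. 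The paper's route is a bit slimmer, since one local-dimension identity (holding deterministically on all of $F_\Lambda(P)$) does double duty, while your Eggleston-style covering is more explicit and would survive in settings where the local dimension is only an inequality rather than an exact limit. One minor remark: your second ``application of SLLN'' is unnecessary---for $x\in F_\Lambda(P)$ the frequency convergence along $\Lambda$ is definitional, and the contribution from positions off $\Lambda$ is deterministic---so the only probabilistic input is $\nu(F_\Lambda(P))=1$. The ``in particular'' step is handled identically in both proofs, by letting $P$ tend to the uniform vector so that the entropy term tends to $1$.
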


\begin{proof}
The proof is nearly the same as the proof of proposition 10.1 in \cite{Fal2014}.
We define a Bernoulli measure $\mu_P$ on $\Sigma_m^{\N}$, which is the infinite product measure such that
\[\mu_P(\{x\in\Sigma_m^{\N}:x_k=j\})= \left\{\begin{aligned}
    &  p_j \hbox{, \ \  if } k\in\Lambda
    \\
    &\frac{1}{m} \hbox{, \ \ if } k \notin \Lambda .
\end{aligned}\right.\]
Let $\ell_n =\#\Lambda_n$ with $\Lambda_n=\Lambda \cap[0, n-1]$.
Denote the $n$-cylinder by $D_n(y)$.
Then $\limsup_{n\rightarrow\infty}\frac{\ell_n}{n}=\rho$ and
\begin{equation}\label{eq:local}
\mu_P(D_n(y)) = m^{-(n-\ell_n)}\prod_{j=0}^{m-1} p_j^{\#\{k\in \Lambda_n: y_k=j\}}.
\end{equation}

By SLLN, $\mu_P(F_\Lambda(P))=1$.
On the other hand, for $y\in F_\Lambda(P)$, from \eqref{eq:local} we get
\[\liminf_{n\rightarrow\infty}\frac{\log_m\mu_P(D_n(y))}{-n} =
1-\rho-\rho\Sigma_{j=0}^{m-1}p_j\log_m p_j.\]

The above two facts allow us to apply the mass distribution principle to confirm the formula \eqref{eq:Hausdorff}.
\medskip

If $P=(p_0,p_1,\ldots,p_{m-1})\not= (1/m,\ldots,1/m)$, $F_\Lambda(P)$ is a subset of the set
\[\{x\in\Sigma_m^{\N}: (x_{\lambda_n})_{n\geq1} \hbox{ not u.d. in } \Sigma_m\}.\]
But we can take 
$P=(p_0,p_1,\ldots,p_{m-1})$ tending to $(1/m,\ldots,1/m)$
so that $\dim_{\mathcal{H}} F_\Lambda(P)$ tends to $1$, according to \eqref{eq:Hausdorff}.  Therefore, we conclude that
\[\dim_{\mathcal{H}} \{x\in\Sigma_m^{\N}: (x_{\lambda_n})_{n\geq1} \hbox{ not u.d. in } \Sigma_m\}=1.\]
\end{proof}

\begin{proof}[Proof of Theorem \ref{thm-fulldimscaling}]
Without loss of generality, we may assume that $\Omega=\mathcal{O}$ and $\lambda_n\geq0$.

To prove this theorem, we construct an isometric bijection on $\mathcal{O}$. 
For a sequence of scaling maps $g_{n}:\mathcal{O} \to \mathcal{F}$  of scaling ratios $q^n$, we define a map \[\psi_{\{g_n\}}:\mathcal{O} \to \mathcal{O}\]
\[x\mapsto [g_0(x)]_0+[g_1(x)]_0 \pi+\cdots+[g_n(x)]_0 \pi^n+\cdots,\]
where $[g_n(x)]_0$ is the 0-digit of the non-Archimedean integer $[g_n(x)]$.

Now we claim that for $x=\sum_{j=0}x_j \pi^j,\ y=\sum_{j=0}y_j \pi^j \in \mathcal{O}$, if \( x_0 = y_0, \ldots, x_{k-1} = y_{k-1} \), then we have
\[[{g_k(x)}]_0=[{g_k(y)}]_0 \Leftrightarrow x_k=y_k.\]
Thus the map $\psi_{\{g_{n}\}}$ is an isometry.

Since $g_k:\mathcal{O} \to \mathcal{F}$ is a scaling map of scaling ratio $q^k$, then for each disk $D \subset \mathcal{O}$  of radius $q^{-k}$, the map $[g_k]:D\to \mathcal{O} \quad x\mapsto [g_k(x)]$ is a bijective map.
Hence,
\[\forall x,y \in D, \quad |[g_k(x)]-[g_k(y)]|_\mathfrak{p}=q^k|x-y|_\mathfrak{p}.\]
For $x=\sum_{j=0}x_j \pi^{j}, y=\sum_{j=0}y_j \pi^{j} \in \mathcal{O}$,  if \( x_0 = y_0, \ldots, x_{k-1} = y_{k-1} \), then $|x-y|_\mathfrak{p}\leq q^{-k}$.
It follows that
\[|[g_k(x)]-[g_k(y)]|_\mathfrak{p}=q^k|x-y|_\mathfrak{p} \leq 1,\]
which implies that
\[[g_k(x)]_0 = [g_k(y)]_0 \quad \hbox{if and only if} \quad x_k=y_k.\]
The claim is proved.

Choose a sequence of scaling maps $g_{n}:\mathcal{O} \to \mathcal{F}$  of scaling ratios $q^n$ such that $g_{\lambda_n}=f_n$. Let 
\[E=\left\{x=\sum_{n=0}^{\infty}x_n \pi^n\in \mathcal{O}: (x_{\lambda_n}) \hbox{\ is not u.d. in } R \right \},\]
where $R\subset \mathcal{O}$ is a system of representatives for $\mathcal{O}/\mathfrak{p}$ with $0\in R$.
By Lemma \ref{lem-nnormal}, $\dim_{\mathcal{H}}(E)=1$.
By the definitions of $\psi_{\{g_n\}}$ and $E$,
\[\psi_{\{g_n\}}^{-1}(E) = \{ x\in \mathcal{O}: ([f_n(x)]_0) \hbox{ is not u.d. in } R\}.\]
Since $\psi_{\{g_n\}}:\mathcal{O} \to \mathcal{O}$ is an isometric bijection, we have \[\dim_{\mathcal{H}} \psi_{\{g_n\}}^{-1}(E)=\dim_{\mathcal{H}}E=1.\]
\end{proof}

\section{The exceptional set of \texorpdfstring{$([\alpha x^n])$}.}

If ${\rm char}\mathcal{F}=0$, the map $f_n(x)=\alpha x^n$ is local scaling of local scaling ratio $|\alpha|_\mathfrak{p}|n|_\mathfrak{p}|x|_\mathfrak{p}^{n-1}$ in some neighborhood of $x$.
Unfortunately we can not find a subsequence of density 1 of $([\alpha x^n])$ with increasing scaling ratios, so we can not derive Theorem \ref{cor-fulldim2} directly from Theorem \ref{thm-fulldimscaling}. 
Theorem \ref{cor-fulldim2} is an immediate corollary of the following proposition.
It is a non-Archimedean version of Proposition 3 in \cite{Kahane2014} showing that the set of highly biased points is of full Hausdorff dimension.

\begin{proposition}\label{prop-fulldim2}
Suppose ${\rm char}\mathcal{F}=0$. Let $K\in \N_+$. For any sequence $(b_n)$ in $\mathcal{O}$ and positive integer $H$, take
\[E:=\{x\in \mathcal{F}: |x|_\mathfrak{p}>1;\; |[\alpha x^n]-b_n|_\mathfrak{p}\leq q^{-H},\; \forall n>0 \hbox{ with } p^K\nmid n\},\]
then $\dim_{\mathcal{H}}E=1$.
\end{proposition}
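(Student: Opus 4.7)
The plan is to construct, for each sufficiently large integer $m$, a $q$-homogeneous subset $C_m \subset E$ (in the sense of Section \ref{sec:p-homogeneous}) whose Hausdorff dimension is at least $1 - H/m$. Letting $m \to \infty$ then yields $\dim_{\mathcal{H}} E \geq 1$, and the reverse inequality is immediate because $\mathcal{F}$ is Ahlfors $1$-regular.

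Fix $a\in\mathcal{F}$ with $|a|_\mathfrak{p} = q^m$ and set $\Omega = D(a, |a|_\mathfrak{p}/q^{e+1}) \subset \{x\in\mathcal{F} : |x|_\mathfrak{p} > 1\}$. By Lemma \ref{lem-scaling}(a), every $f_n(x)=\alpha x^n$ is scaling on $\Omega$ with ratio $q^{\lambda_n}$, where $\lambda_n = -v_\mathfrak{p}(\alpha) - v_\mathfrak{p}(n) + m(n-1)$. Enumerate $\{n\geq 1 : p^K\nmid n\}=\{n_1<n_2<\cdots\}$. Since $v_p(n_k)\leq K-1$ gives $0\leq v_\mathfrak{p}(n_k)\leq e(K-1)$, the increments $\Delta_k := \lambda_{n_k} - \lambda_{n_{k-1}}$ (for $k\geq 2$) satisfy $\Delta_k \geq m(n_k - n_{k-1}) - e(K-1) \geq m - e(K-1)$. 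I choose $m$ large enough that $\Delta_k\geq H$ for every $k\geq 2$ and that the scaling image $f_{n_1}(\Omega)$ already has diameter at least $q^H$.

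I now recursively build nested families $\mathcal{B}_0=\{\Omega\}\supset\mathcal{B}_1\supset\mathcal{B}_2\supset\cdots$, in which each $B\in\mathcal{B}_k$ is a disk of radius $q^{-\lambda_{n_k}-H}$ contained in some $B'\in\mathcal{B}_{k-1}$ and satisfying $[\alpha x^{n_k}]\in D(b_{n_k},q^{-H})$ for every $x\in B$. Given $B'\in\mathcal{B}_{k-1}$, Lemma \ref{lem-scaling}(a) tells us that $f_{n_k}$ maps $B'$ bijectively onto a disk of radius $q^{\Delta_k-H}\geq 1$; a linear rescaling $B'\cong\mathcal{O}$ together with Lemma \ref{lem-1} then shows $\tilde f_{n_k}|_{B'}$ is a uniform $q^{\Delta_k-H}$-to-$1$ surjection onto $\mathcal{O}$, so the preimage of $D(b_{n_k},q^{-H})$ inside $B'$ is a disjoint union of exactly $q^{\Delta_k-H}$ disks of radius $q^{-\lambda_{n_k}-H}$. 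I declare these to be the sons of $B'$ in $\mathcal{B}_k$ and set $C_m := \bigcap_{k\geq 0}\bigcup_{B\in\mathcal{B}_k}B \subset E$.

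The subtree describing $C_m$ inside $\mathcal{T}_\Omega$ is $q$-homogeneous: between levels $\lambda_{n_{k-1}}+H$ and $\lambda_{n_k}+H$, every ball is split into $q$ sons on $\Delta_k - H$ of these levels (the free $\pi$-adic digits) and into a single son on the remaining $H$ levels (the digits forced by the target $D(b_{n_k},q^{-H})$). Hence the branch-level set $I_\Omega$ of $C_m$ has lower density at least
\[
\liminf_{k\to\infty} \frac{\lambda_{n_k} - kH}{\lambda_{n_k} + H}
\;\geq\; 1 - H\cdot\limsup_{k\to\infty} \frac{k}{\lambda_{n_k}}
\;\geq\; 1 - \frac{H}{m},
\]
using $\lambda_{n_k}\geq m(n_k-1) + O_{K,\alpha}(1)$ and $n_k\geq k$. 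Applying the Hausdorff dimension formula for $q$-homogeneous sets (Lemma \ref{thm-main-ebp}) then gives $\dim_{\mathcal{H}} C_m \geq 1 - H/m$, and letting $m\to\infty$ completes the proof. The one delicate point is securing the uniform lower bound $\Delta_k\geq H$: it is what guarantees that the target $D(b_{n_k},q^{-H})$ can always be reached from every $B'\in\mathcal{B}_{k-1}$, and simultaneously what forces the $q$-homogeneous branching density to stay close to $1$.
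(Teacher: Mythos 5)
Your proof is correct and follows essentially the same strategy as the paper's: restrict to a disk $\Omega$ far from the origin so that $f_n(x)=\alpha x^n$ is scaling there (Lemma \ref{lem-scaling}), pass to the subsequence of exponents not divisible by $p^K$ to ensure strictly increasing scaling ratios with uniformly large gaps, and apply the $q$-homogeneous dimension formula (Lemma \ref{thm-main-ebp}), letting the outer scale parameter tend to infinity. The only cosmetic difference is the choice of $\Omega$ — the paper centers $\Omega$ at $A/\alpha+b_1/\alpha$ so that the $n=1$ constraint holds identically and tracks the sharper lower bound $1-(1-p^{-K})H/L$, whereas you simply take $|a|_\mathfrak{p}=q^m$ large and settle for the cruder but equally sufficient bound $1-H/m$.
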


In order to prove Prop \ref{prop-fulldim2}, we show that the set $E$ has a $q$-homogeneous structure by the following Lemma \ref{thm-main-ebp}, which is a non-Archimedean version of Theorem 7 in \cite{BLM2019}.
Moreover, we need Lemma \ref{lem-dimlub}, which is a simple application of Example 4.6 in \cite{Fal2014} and Proposition 2.3 in \cite{Fal1997}, to explicitly compute the Hausdorff dimension.

\begin{lemma}\label{lem-dimlub}
Let $E_0=\mathcal{O} \supset E_1 \supset E_2 \supset \cdots$, where $E_k$ is the union of finitely many disks (called $k$-level disks). Suppose that each $(k-1)$-level disk contains at least $m_k \geq 2$ $k$-level disks. Let $E=\bigcap_{k\geq 0}{E_k}$.
\begin{itemize}
\item[(a)] If the distance between two $k$-level disks is at least $\delta_k$ with $\delta_k \to 0$, then
\[\dim_{\mathcal{H}}E \geq \liminf_{k\to \infty} \frac{\log (m_1\cdots m_{k-1})}{-\log (m_k\delta_k)}.\]
\item[(b)] If each $(k-1)$-level disk contains $m_k \geq 2$ $k$-level disks and the radius of $k$-level disks is at most $d_k$ with $d_k \to 0$, then
\[\dim_{\mathcal{H}}E \leq \liminf_{k\to \infty} \frac{\log (m_1\cdots m_k)}{-\log (d_k)}.\]
\end{itemize}
\end{lemma}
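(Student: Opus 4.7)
Both bounds are standard Moran-type dimension estimates in the ultrametric setting, and should follow by adapting Example~4.6 of \cite{Fal2014} and Proposition~2.3 of \cite{Fal1997} to the non-Archimedean context. I will treat the two directions separately.

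For (b), the plan is to use the natural covering of $E$ at each stage. Since $E \subset E_k$ and $E_k$ is a union of at most $m_1 m_2 \cdots m_k$ disks, each of radius at most $d_k$ (equivalently, of diameter at most $d_k$ in the ultrametric), I obtain
\[
\mathcal{H}^{s}_{d_k}(E) \;\le\; m_1 m_2 \cdots m_k \cdot d_k^{\,s}
\]
for every $s > 0$. Passing to a subsequence of indices along which $\tfrac{\log(m_1 \cdots m_k)}{-\log d_k}$ approaches $\liminf_k \tfrac{\log(m_1\cdots m_k)}{-\log d_k}$, the right-hand side stays bounded whenever $s$ strictly exceeds this liminf, forcing $\mathcal{H}^{s}(E) < \infty$ and hence $\dim_{\mathcal{H}} E \le s$. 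Taking the infimum over admissible $s$ yields (b).

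For (a), I will apply the mass distribution principle to the natural ``uniform'' measure on the construction tree. Namely, I construct a Borel probability measure $\mu$ supported on $E$ by declaring, for every $k$-level disk $D$, $\mu(D) = (m_1 \cdots m_k)^{-1}$ — equivalently, the mass of each parent is split equally among $m_k$ of its $\ge m_k$ children. Consistency at each level (combined with the finiteness of each $E_k$) gives a well-defined Borel measure via Kolmogorov extension. The substantive step is the local bound
\[
\mu(U) \;\le\; C \,(\mathrm{diam}\,U)^{s}
\]
for every small $U$ and every $s < \liminf_k \tfrac{\log(m_1 \cdots m_{k-1})}{-\log(m_k \delta_k)}$. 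Given $r = \mathrm{diam}\,U$, I choose $k$ so that $r$ sits in the regime $m_k \delta_k \le r < m_{k-1}\delta_{k-1}$. The ultrametric structure, together with the separations $\delta_j$ at each level $j\le k$, limits the number of $k$-level disks that $U$ can meet to at most $m_k$, producing the uniform bound $\mu(U) \le (m_1 \cdots m_{k-1})^{-1}$. Combined with $r \ge m_k \delta_k$, this yields $\mu(U) \le C r^{s}$, after which the mass distribution principle \cite[Thm.~4.2]{Fal2014} closes the argument.

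The main delicate point — and the reason the bound in (a) features the product $m_k \delta_k$ rather than simply $\delta_k$ — is the sharp identification of the cutoff scale $r \asymp m_k \delta_k$ separating the ``single-parent'' from the ``multiple-parent'' regime for $U$. In the ultrametric context this calibration requires some care, because both the radii of the $k$-level disks and their pairwise separations $\delta_k$ influence how many siblings of a fixed parent can simultaneously intersect a set of diameter $r$; a sloppy choice of cutoff yields only the weaker exponent with $\delta_k$ alone in the denominator. Once this local estimate is established, the construction of $\mu$, the covering estimate in (b), and the invocation of the mass distribution principle are all routine.
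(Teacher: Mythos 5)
Your part (b) is correct and in fact takes a simpler route than the paper: you cover $E$ by the $m_1\cdots m_k$ disks of $E_k$, each of diameter at most $d_k$, read off $\mathcal{H}^s_{d_k}(E)\le (m_1\cdots m_k)\,d_k^{\,s}$, and conclude finiteness of $\mathcal{H}^s(E)$ along the subsequence realizing the $\liminf$ whenever $s$ exceeds it. The paper instead reuses the mass distribution $\mu$ built for part (a), bounds the lower local dimension $\liminf_{r\to 0}\log\mu(B(x,r))/\log r$ pointwise at every $x$, and invokes \cite[Prop.~2.3]{Fal1997}. Both are valid; your covering argument avoids constructing $\mu$ at all for this half.

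Your part (a), however, has a gap at precisely the step you flag as delicate. You choose $k$ with $m_k\delta_k\le r<m_{k-1}\delta_{k-1}$ and assert that $U$ then meets at most $m_k$ of the $k$-level disks, hence $\mu(U)\le(m_1\cdots m_{k-1})^{-1}$. But the bound ``at most $m_k$'' comes from $U$ lying inside a single $(k-1)$-level disk, which needs $r<\delta_{k-1}$; your regime only guarantees $r<m_{k-1}\delta_{k-1}$, and the sub-range $\delta_{k-1}\le r<m_{k-1}\delta_{k-1}$ is nonempty since $m_{k-1}\ge 2$. There $U$ can straddle several $(k-1)$-level disks, and neither the direct separation bound (which only gives $\lesssim qr/\delta_k$ many $k$-level disks) nor the single-parent count justifies $\mu(U)\le(m_1\cdots m_{k-1})^{-1}$. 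Your instinct that $m_k$ belongs in the denominator is right, but it is not produced by shifting the cutoff: the single- vs.\ multi-parent transition genuinely sits at $r\asymp\delta_{k-1}$. The paper works on the natural range $\delta_k\le r<\delta_{k-1}$, where \emph{both} counts are valid --- $U$ meets at most $\min\{m_k,\;qr/\delta_k\}$ $k$-level disks --- and then interpolates via $\min\{a,b\}\le a^{1-s}b^s$ to get $\mu(U)\le q^s r^s/\bigl((m_1\cdots m_{k-1})(m_k\delta_k)^s\bigr)$. The factor $m_k\delta_k$ in the final exponent arises from this interpolation (the two bounds cross near $r\asymp m_k\delta_k$), not from a repositioned regime boundary; without the second, $r$-dependent count, the argument only gives the weaker exponent with $\delta_k$ alone, exactly the pitfall you warn about.
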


\begin{lemma}\label{thm-main-ebp}
Let $\Omega$ be a disk in $\mathcal{F}$ of radius $q^{-H_0}$ and $(H_n)_{n\geq 1}$ be a sequence of integers. Let $(f_{n})$ be a sequence of scaling maps defined on $\Omega$ of scaling ratios $(q^{\lambda_n})$. Suppose 
\[
\lambda_1\geq H_0
\quad \text{and} \quad 
\lambda_{n+1}-\lambda_n\geq H_{n}.
\]
Then, for any sequence $(b_n)$ in $\mathcal{O}$, the set 
\[
\Gamma_{b, H}:= \{x\in \Omega : |[f_n(x)]-b_n|_\mathfrak{p}\leq q^{-H_n},\; \forall n\ge 1\}
\]
is a $q$-homogeneous subset of $\Omega$ and admits its Hausdorff dimension
\begin{equation}\label{dim-of-hom}
\dim_{\mathcal{H}}\Gamma_{b, H}
= \liminf_{n\to\infty} 
\frac{\lambda_n-\sum_{k=1}^{n-1}H_k}
{\lambda_n+H_n}.
\end{equation}
\end{lemma}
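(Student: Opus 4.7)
The plan is to realize $\Gamma_{b, H}$ as an explicit $q$-homogeneous Moran-type set in $\Omega$ and then apply Lemma \ref{lem-dimlub} to read off its Hausdorff dimension. First I would normalize via a scaling bijection $T : \mathcal{O} \to \Omega$ of ratio $q^{-H_0}$, which preserves Hausdorff dimension; composing each $f_n$ with $T$ reduces everything to the case $\Omega = \mathcal{O}$, i.e., $H_0 = 0$, with $\lambda_n$ replaced by $\lambda_n - H_0$. The resulting constant shift will be absorbed by the liminf at the end. I would then introduce the level sets
\[
E_n \;=\; \bigcap_{k=1}^n A_k, \qquad A_k \;=\; \bigl\{x \in \Omega : [f_k(x)] \in D(b_k, q^{-H_k})\bigr\},
\]
so that $\Gamma_{b,H} = \bigcap_{n \geq 1} E_n$.

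The key structural step will be an inductive description of $E_n$: I claim it is a disjoint union of disks of common radius $q^{-H_n - \lambda_n}$, and that each disk $D$ of $E_{n-1}$ (radius $q^{-H_{n-1} - \lambda_{n-1}}$, with the convention $\lambda_0 = H_0 = 0$) contains exactly $m_n := q^{\lambda_n - \lambda_{n-1} - H_{n-1}}$ disks of $E_n$. To establish this, I would fix one such $D$, pick a scaling bijection $T_D : \mathcal{O} \to D$, and observe that $f_n \circ T_D$ is a scaling map on $\mathcal{O}$ of ratio $q^{\lambda_n - \lambda_{n-1} - H_{n-1}} \geq 1$, the non-negativity using the hypothesis $\lambda_n - \lambda_{n-1} \geq H_{n-1}$. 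Lemma \ref{lem-1} then tells us that $[\cdot] \circ f_n \circ T_D : \mathcal{O} \to \mathcal{O}$ is an $m_n$-to-$1$ Haar-measure preserving map; its proof furnishes a canonical decomposition of $\mathcal{O}$ into $m_n$ disjoint disks of radius $q^{-(\lambda_n - \lambda_{n-1} - H_{n-1})}$, on each of which the map is a bijection onto $\mathcal{O}$, so the preimage of $D(b_n, q^{-H_n})$ consists of exactly $m_n$ disks of radius $q^{-H_n - (\lambda_n - \lambda_{n-1} - H_{n-1})}$, one per slot. Pulling back through $T_D$ and iterating yields $M_n := q^{\lambda_n - \sum_{k=1}^{n-1} H_k}$ disks of $E_n$ at level $\ell_n := \lambda_n + H_n$, and shows moreover that in the tree of $\Gamma_{b, H}$ a node has exactly $q$ sons if its level belongs to
\[
I_{\Gamma_{b,H}} \;:=\; \bigsqcup_{n \geq 1}\bigl\{\lambda_{n-1} + H_{n-1},\, \ldots,\, \lambda_n - 1\bigr\},
\]
and exactly one son otherwise. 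This is precisely the $q$-homogeneous property.

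For the dimension I would apply Lemma \ref{lem-dimlub}. On each non-branching segment $\ell \in [\lambda_n + H_n,\, \lambda_{n+1}]$, the count $M_\ell$ stays constant while the radius shrinks, so the ratio $\log M_\ell / (\ell \log q)$ is monotone decreasing and attains its minimum on the segment precisely at $\ell_n = \lambda_n + H_n$. Applying Lemma \ref{lem-dimlub}(a) along the subsequence of genuinely branching levels and Lemma \ref{lem-dimlub}(b) at the levels $\ell_n$, both bounds collapse at these endpoints to give
\[
\dim_{\mathcal{H}} \Gamma_{b, H} \;=\; \liminf_{n \to \infty} \frac{\log M_n}{-\log q^{-\ell_n}} \;=\; \liminf_{n \to \infty} \frac{\lambda_n - \sum_{k=1}^{n-1} H_k}{\lambda_n + H_n},
\]
where the $H_0$-shift introduced by the normalization is washed out in the liminf provided $\lambda_n + H_n \to \infty$ (the alternative being a trivial finite union of full disks of dimension $1$).

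The hard part will be the inductive preimage analysis: one must verify that the $m_n$ preimage disks inside $D$ are distributed one-per-slot in the canonical partition of $D$ into disks of radius $q^{-\lambda_n}$, rather than clumping into fewer slots. This rests on the explicit decomposition of $\mathcal{O}$ in the proof of Lemma \ref{lem-1} together with the ultrametric property of $|\cdot|_\mathfrak{p}$. The limiting case $m_n = 1$, allowed by the inequality $\lambda_n - \lambda_{n-1} \geq H_{n-1}$, is harmless: it simply means segment $n$ contributes no branching levels, and Lemma \ref{lem-dimlub}(a) is applied along the subsequence of genuinely branching indices.
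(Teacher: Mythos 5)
Your proposal is correct and takes essentially the same approach as the paper: inductively realizing $\Gamma_{b,H}$ as a nested sequence of disk unions (a $q$-homogeneous set), identifying the branch levels, and invoking Lemma \ref{lem-dimlub}. Your normalization to $\mathcal{O}$, explicit citation of Lemma \ref{lem-1} for the $m_n$-to-one decomposition, and careful treatment of the $m_n=1$ degenerate case and the $H_0$-shift in the liminf are valid clarifications that the paper passes over implicitly, but they do not change the substance of the argument.
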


The assumption $\lambda_{n+1}-\lambda_n\geq H_{n}$ is a lacunary condition.
Note that although the set $\Gamma_{b, H}$ depends on $b_n$ and $H_n$, its dimension depends only on $H_n$.
This dependence on $H_n$ shows the richness of these fractal sets, which are in the opposite direction of uniform distribution.

\begin{proof}[Proof of Lemma \ref{thm-main-ebp}]
Recall that $\tilde{f}(x)=[f(x)]$ for any scaling map $f:\Omega \mapsto \mathcal{F}$.
By $\lambda_1\geq H_0$, $f_1(\Omega) \supset \mathcal{O}$.
Thus the map $\tilde{f_1}: \Omega \rightarrow \mathcal{O}$ is a $q^{\lambda_1-H_0}$-to-1 scaling surjection.
Then the restriction map
\[\tilde{f}_{1,h}: D_{1,h} \rightarrow \mathcal{O}\]
is bijective, where $D_{1,h}$ is any disk of radius $q^{-\lambda_1}$ in $\Omega$, $h=1,\ldots,q^{\lambda_1-H_0}$.
Hence
\[D_{1,h}':= \tilde{f}_{1,h}^{-1}(b_1+\pi^{H_1}\mathcal{O})\]
is a disk of radius $q^{-\lambda_1-H_1}$ in $D_{1,h}$.
Moreover, for $H_0 \leq i<\lambda_1$, $i\in I$ and for $\lambda_1\leq i\leq H_1-1+\lambda_1$, $i\notin I$.
Denote an arbitrary $D_{1,h}'$ by $D_1$.

Inductively, for $n>1$, we have known that $D_{n-1}$ is a disk of radius $q^{-\lambda_{n-1}-H_{n-1}}$ and
\[D_{n-1} \subset \{x\in \Omega: |[f_k(x)]-b_k|_\mathfrak{p}\leq q^{-H_k},\; \forall 1\leq k\leq n-1\}.\]
Since $\lambda_n-\lambda_{n-1}\geq H_{n-1}$, the map $f_n: D_{n-1} \rightarrow \mathcal{O}$ is a $q^{\lambda_n-\lambda_{n-1}-H_{n-1}}$-to-1 scaling surjection.
Then the restriction map
\[\tilde{f}_{n,h}: D_{n,h} \rightarrow \mathcal{O}\]
is bijective, where $D_{n,h}$ is any disk of radius $q^{-\lambda_n}$ in $D_{n-1}$, $h=1,\ldots,q^{\lambda_n-\lambda_{n-1}-H_{n-1}}$.
Hence
\[D_{n,h}':= \tilde{f}_{n,h}^{-1}(b_n+\pi^{H_n}\mathcal{O})\]
is a disk of radius $q^{-\lambda_n-H_n}$ in $D_{n,h}$.
Moreover, for $H_{n-1}+\lambda_{n-1}\leq i<\lambda_n$, $i\in I$ and for $\lambda_n\leq i\leq H_n-1+\lambda_n$, $i\notin I$.
Denote an arbitrary $D_{n,h}'$ by $D_n$.

In summary, $\Gamma_{b, H}$ is $q$-homogeneous and the branched level set $I$ is exactly
\[\Z\cap\left([H_0,\lambda_1-1]\cup \bigcup_{n\geq 1}[H_n+\lambda_n,\lambda_{n+1}-1]\right).\]
As in Lemma \ref{lem-dimlub}, we have
\[m_1=q^{\lambda_1-H_0},\quad d_1=q^{-\lambda_1-H_1},\quad \delta_1=q^{1-\lambda_1},\]
\[m_n=q^{\lambda_n-\lambda_{n-1}-H_{n-1}},\quad d_n=q^{-\lambda_n-H_n},\quad \delta_n=q^{1-\lambda_n} \quad (n\geq2).\]
Note that $m_n\delta_n=qd_{n-1}$. Thus the Hausdorff dimension
\[
\dim_{\mathcal{H}}\Gamma_{b, H}= \liminf_{k\to \infty} \frac{\log (m_1\cdots m_k)}{-\log (d_k)}
= \liminf_{n\rightarrow\infty} \frac{\lambda_n-\sum_{k=1}^{n-1} H_k}{\lambda_n+H_n}.
\]
\end{proof}

Note that if the branch level set of a $q$-homogeneous set is a strictly increasing sequence $(\kappa_n)$, then the Hausdorff dimension of the $q$-homogeneous set is
\begin{equation}
\liminf_{n\rightarrow \infty} \frac{n}{\kappa_n}
\end{equation}
by Lemma \ref{lem-dimlub}. In particular, if the branch level set is of density $\rho$ in $\N$, then the Hausdorff dimension of the $p$-homogeneous set is exactly $\rho$.
(So we may not derive Corollary \ref{cor-fulldim} directly from Lemma \ref{thm-main-ebp}.)

\begin{proof}[Proof of Prop \ref{prop-fulldim2}]
Assume that $|\alpha|_\mathfrak{p}=q^{S}$.
Take an $L\in\N_+$ such that
\[L\gg \max\{H, K, |S|\}.\]
The set
\[\{x\in\mathcal{F}: |x|_\mathfrak{p}>1;\; |[\alpha x]-b_1|_\mathfrak{p}\leq q^{-H}\}\]
contains a disk
\[\Omega:=A/\alpha+b_1/\alpha+\pi^{H+S}\mathcal{O}\]
with $|A|_\mathfrak{p}=q^L$. We only need to consider the subset of $\Omega$. Let $E'=E\cap \Omega$.
By Lemma \ref{lem-scaling}, the map $x\mapsto \alpha x^n$ is a scaling map on $\Omega$ of scaling ratio $|\alpha|_\mathfrak{p}|n|_\mathfrak{p} q^{L(n-1)}$.

Take
\begin{equation}\label{eq:lem5.2(1)}
m_n=n+\lfloor \frac{n-1}{p^K-1} \rfloor
\end{equation}
and $f_n(x)=\alpha x^{m_n}$ for $n\geq 1$. Then $(m_n)$ is the sequence of all positive integers that are not divided by $p^K$.
Note that
\[E'=\{x\in \Omega: |[f_n(x)]-b_n|_\mathfrak{p}\leq q^{-H}, \forall n\geq 2\}.\]

Now the scaling ratios satisfy that
\[|\alpha|_\mathfrak{p}|m_2|_\mathfrak{p} q^{L(m_2-1)}>q^H
\quad \text{and} \quad
\frac{|\alpha|_\mathfrak{p}|m_{n+1}|_\mathfrak{p}q^{L(m_{n+1}-1)}}{|\alpha|_\mathfrak{p}|m_n|_\mathfrak{p}q^{L(m_n-1)}}>q^H.\]
By Lemma \ref{thm-main-ebp} and \eqref{eq:lem5.2(1)}, $E'$ is a $q$-homogeneous set in $D$ with
\begin{align*}
\dim_{\mathcal{H}} E'
&= \liminf_{n\rightarrow\infty} \frac{\log_q |m_n|_\mathfrak{p}+(m_n-1)L-(n-1)H}{\log_q |m_n|_\mathfrak{p}+(m_n-1)L+H}\\
&= \liminf_{n\rightarrow\infty} \left(1-\frac{nH}{m_nL}\right)\\
&= 1-(1-p^{-K})\frac{H}{L}.
\end{align*}
Since $L$ can be arbitrarily large and $E'\subset E$, $\dim_{\mathcal{H}}E=1$.
\end{proof}

There are some cases where the sequence $(\{x^n\})$ in $\R$ is not uniformly distributed. Except for some trivial cases, it seems that Pisot numbers are the only cases that can be found. Recall that an algebraic integer $\xi$ is a Pisot numbers if $\xi>1$ and the Archimedean absolute values of all its conjugate roots are smaller than 1. Still, we can talk about Pisot-Chabauty numbers, which are the $p$-adic version of Pisot numbers. This conception is from \cite{SSS2016}.

\begin{example}\label{Pisot}
\rm{ We say that $\xi \in \Q_p$ is a \textit{Pisot-Chabauty number} if
\begin{itemize}
    \item[(1)] $\xi=\xi_1$ is a root of a monic polynomial over $\Z[1/p]$ with conjugates $\xi_2,\ldots,\xi_m \in \Q_p$;
    \item[(2)] $|\xi_1|_p >1$;
    \item[(3)] $|\xi_i|_p <1$ for $i=2,\ldots,m$;
    \item[(4)] $|\xi_i|_\infty <1$ for $i=1,2,\ldots,m$.
\end{itemize}
For example, if $k>l$ are two positive integers, then $\frac{-1-\sqrt{1-4p^{k+l}}}{2p^k}$ in $\Q_p$ is a Pisot-Chabauty number.

We claim that for a Pisot-Chabauty number $\xi$, the only possible limit points of the sequence $([\xi^n])_{n\geq 1}$ are $0$ and $-1$, so that this sequence is not uniformly distributed in $\Z_p$.

Let $T_n(\xi)=\xi_1^n+\cdots+\xi_m^n$. Since $T_n(\xi)$ is a symmetric function of $\xi_1,\ldots,\xi_m$ and the coefficients of the minimal polynomial of $\xi$ are in $\Z[1/p]$, $T_n(\xi) \in \Z[1/p]$. For large $n$, $|T_n(\xi)|_\infty <1$, so
\[T_n(\xi) \in \{\pm m/p^v: v\in \N_+,m=1,\ldots,p^v-1\},\]
which means that $[T_n(\xi)]=0$ or $-1$. Now for large $n$,
\[[\xi_1^n]-[T_n(\xi)] =\xi_1^n-T_n(\xi) =\xi_2^n+\cdots+\xi_m^n,\]
and so $|[\xi_1^n]-[T_n(\xi)]|_p \rightarrow 0$.
}
\end{example}

\section{Some metrical results on the distribution}

Based on Lemma \ref{thm-main-ebp}, there are more metrical results on the distribution of the sequences $\left([\alpha x^n]\right)$ and $\left([\beta^n x]\right)$.

For the sequence of the form $\beta^n x$, the Mahler's 3/2 problem \cite{Mahler1968} is an interesting question in the case of real numbers.
The question is that whether we can find a positive number $x$ such that
\[0\leq \{x(\frac{3}{2})^n\}< 1/2\]
for all $n\geq 0$.
Similarly, can we find some $x\in \Q_2$ such that
\[[x (\frac{3}{2})^n]\in 2\Z_2\]
for all $n$?
This is much easier since $|\frac{3}{2}|_2=2$ is an integer while $|\frac{3}{2}|_\infty=1.5$ is not a rational power of any integer.
By Lemma \ref{thm-main-ebp}, for any disk $D$ of radius 1 in $\Q_2$, there is exactly one point $x\in D$ such that for all $n\geq 0$, $[x (\frac{3}{2})^n]\in 2\Z_2$.
(Note that if the branch level set is a finite set, then the $q$-homogeneous set is a finite set. In particular, if the branch level set is empty, then the $q$-homogeneous set is one point.)
\medskip

For the sequence $\left([\alpha x^n]\right)$, we have the following two propositions.
Proposition \ref{prop-other1} is a non-Archimedean version of Theorem 1.1 in \cite{Baker2015} and Proposition \ref{prop-other2} is a non-Archimedean version of Theorem 1 in \cite{BLM2019}.
We may repeat the proofs in \cite{Baker2015} and \cite{BLM2019} using non-Archimedean language, but we could simplify the proofs via Lemma \ref{thm-main-ebp}.

\begin{proposition}\label{prop-other1}
Assume ${\rm char}\mathcal{F}=0$. Fix $0\neq \alpha \in \mathcal{F}$.
Let $K\in \N_+$. Let $|z|_\mathfrak{p}>1$ and $\delta>0$. Suppose that $(r_n)$ is a strictly increasing sequence in $\N$ satisfying
\[\lim_{n\to \infty} (r_{n+1}-r_n)=\infty\]
and $p^K\nmid r_n$ for any $n$. For any sequence $(b_n)$ in $\mathcal{O}$, take
\[E(r_n,b_n):=\{x\in D(z,\delta): |x|_\mathfrak{p}>1, \lim_{n\to \infty}([\alpha x^{r_n}]-b_n)=0)\},\]
then $\dim_{\mathcal{H}}E(r_n,b_n)=1$.
\end{proposition}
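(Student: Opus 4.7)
The plan is to construct, inside $D(z,\delta)$, a $q$-homogeneous set $\Gamma$ contained in $E(r_n,b_n)$ of Hausdorff dimension $1$, by invoking Lemma \ref{thm-main-ebp} for the scaling maps $f_n(x)=\alpha x^{r_n}$. First I would localize by setting $\rho = \min\{\delta,\,|z|_\mathfrak{p}/q^{e+1}\}$ and $\Omega = D(z,\rho)$. The strong triangle inequality forces $|x|_\mathfrak{p}=|z|_\mathfrak{p}>1$ on $\Omega$, and Lemma \ref{lem-scaling}(a) shows that each $f_n|_\Omega$ is scaling with ratio $q^{\lambda_n}$, where
\[
\lambda_n \;=\; S + (r_n-1)L - e\,v_p(r_n), \qquad S=\log_q|\alpha|_\mathfrak{p},\quad L=\log_q|z|_\mathfrak{p}\ge 1.
\]
The hypothesis $p^K\nmid r_n$ gives $0\le v_p(r_n)\le K-1$, so $\lambda_n = (r_n-1)L + O(1)$. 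Combined with $r_{n+1}-r_n\to\infty$ this yields both $\lambda_{n+1}-\lambda_n\to\infty$ and, by a simple telescoping argument, $\lambda_n/n\to\infty$.

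Next I would choose integer depths
\[
H_n \;=\; \min\bigl\{\lambda_{n+1}-\lambda_n,\ \lfloor\sqrt{\lambda_n/n}\rfloor\bigr\}.
\]
Both arguments of the minimum tend to infinity by the previous paragraph, so $H_n\to\infty$ and, by construction, $H_n\le\lambda_{n+1}-\lambda_n$. Since $(\lambda_k)$ is eventually increasing, a Cauchy--Schwarz-style estimate gives
\[
\sum_{k=1}^{n-1} H_k \;\le\; \sum_{k=1}^{n-1}\sqrt{\lambda_k/k} \;\le\; \sqrt{\lambda_n}\sum_{k=1}^{n-1}k^{-1/2} \;\le\; 2\sqrt{n\,\lambda_n},
\]
so both $H_n/\lambda_n$ and $\bigl(\sum_{k<n}H_k\bigr)/\lambda_n$ tend to $0$. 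After discarding finitely many initial terms so that the starting condition $\lambda_{n_0}\ge -\log_q\rho$ of Lemma \ref{thm-main-ebp} is satisfied---which does not affect the tail-limit defining $E(r_n,b_n)$---Lemma \ref{thm-main-ebp} produces the $q$-homogeneous set
\[
\Gamma \;=\; \bigl\{x\in\Omega : |[\alpha x^{r_n}]-b_n|_\mathfrak{p}\le q^{-H_n}\ \text{for all}\ n\ge n_0\bigr\}
\]
whose Hausdorff dimension equals $\liminf_n(\lambda_n-\sum_{k=n_0}^{n-1}H_k)/(\lambda_n+H_n)=1$. Since $H_n\to\infty$ forces $[\alpha x^{r_n}]\to b_n$ on $\Gamma$, we have $\Gamma\subseteq E(r_n,b_n)$, whence $\dim_\mathcal{H}E(r_n,b_n)\ge 1$; the reverse inequality is trivial.

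The delicate step is the calibration of $H_n$: it must tend to infinity (to guarantee convergence in the definition of $E(r_n,b_n)$) yet grow slowly enough that $H_n$ and the partial sums $\sum_{k<n}H_k$ are both $o(\lambda_n)$, so that the dimension formula of Lemma \ref{thm-main-ebp} collapses to $1$. These two constraints are reconciled precisely by the combined strength of the hypotheses $r_{n+1}-r_n\to\infty$ (yielding $\lambda_{n+1}-\lambda_n\to\infty$) and $p^K\nmid r_n$ (preventing $|r_n|_\mathfrak{p}$ from killing the scaling ratio, so that $\lambda_n/n\to\infty$). Once this balance is set up, the rest is a direct application of the dimension formula and no new idea is required.
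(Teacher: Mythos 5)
Your proof is correct and rests on the same key ingredient as the paper's, namely Lemma~\ref{thm-main-ebp} applied to the scaling maps $f_n(x)=\alpha x^{r_n}$ on a sub-disk of $D(z,\delta)$ where Lemma~\ref{lem-scaling}(a) applies, with $|\Gamma|\subseteq E(r_n,b_n)$ coming from $H_n\to\infty$. Where you diverge is in the calibration of the depths $H_n$. The paper first \emph{refines} $(r_n)$ to a sequence $(\tilde r_n)$ satisfying the sub-exponential growth condition $\tilde r_{n+1}\leq(1+\epsilon)\tilde r_n$ in addition to $\tilde r_{n+1}-\tilde r_n\to\infty$ and $p^K\nmid\tilde r_n$, then takes $\epsilon_n=q^{-H_n}$ proportional to $|z|_\mathfrak{p}^{-(1-\eta)(\tilde r_{n+1}-\tilde r_n)}$, and finally optimizes over two auxiliary parameters $\epsilon\to0$, $\eta\to1$ to push the resulting lower bound $\eta/(1+\epsilon-\eta\epsilon)$ up to $1$. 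Your choice $H_n=\min\{\lambda_{n+1}-\lambda_n,\ \lfloor\sqrt{\lambda_n/n}\rfloor\}$ avoids both the refinement of the sequence and the two-parameter optimization: the hypotheses $r_{n+1}-r_n\to\infty$ and $p^K\nmid r_n$ yield $\lambda_{n+1}-\lambda_n\to\infty$ and $\lambda_n/n\to\infty$, which are exactly what is needed to get simultaneously $H_n\to\infty$, $H_n=o(\lambda_n)$, and $\sum_{k<n}H_k=O(\sqrt{n\lambda_n})=o(\lambda_n)$, so the dimension formula of Lemma~\ref{thm-main-ebp} collapses to $1$ in one pass. The paper's refinement buys Hadamard-type control over $\tilde r_{n+1}/\tilde r_n$ that your slowly growing depths render unnecessary; your version is a bit more streamlined and transparent about which hypothesis is doing which job, at no loss of rigor.
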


\begin{proof}
Without loss of generality we may assume that $\delta$ is an integer power of $q$ and $\delta<\min\{1,|z|_\mathfrak{p}/q^{e+1}\}$.
Let $0<\epsilon<1$ be some constant. Now we take two refinements $(\tilde r_n)$, $(\tilde b_n)$ of $(r_n)$ and $(b_n)$, such that
\begin{equation}\label{eq:prop7.1(1)}
\lim_{n\to \infty} (\tilde r_{n+1}-\tilde r_n)=\infty,
\quad
\tilde r_{n+1}\leq (1+\epsilon)\tilde r_n
\quad \hbox{and} \quad
p^K\nmid \tilde r_n,\;\forall n\geq 1
\end{equation}
(we can do this as in the proof in \cite{Baker2015}). Note that $E(\tilde r_n,\tilde b_n) \subset E(r_n,b_n)$, so we only need to consider the lower bound of the Hausdorff dimension of $E(\tilde r_n,\tilde b_n)$.

Let $0<\eta<1$ be a constant. Take 
\[f_n: D(z,\delta) \rightarrow \mathcal{F} \quad f_n(x)=\alpha x^{\tilde r_{n+N-1}}\]
be a scaling map of scaling ratio
\begin{equation}\label{eq:prop7.1(2)}
q^{\lambda_n}=|\alpha|_\mathfrak{p}| \tilde r_{n+N-1}|_\mathfrak{p} |z|_\mathfrak{p}^{\tilde r_{n+N-1}-1}.
\end{equation}
Let
\begin{equation}\label{eq:prop7.1(3)}
\epsilon_n:=|z|_\mathfrak{p}^{-\lfloor (1-\eta)(\tilde r_{n+N}-\tilde r_{n+N-1}) \rfloor}.
\end{equation}
This $N$ here satisfies that
\[|\tilde r_N|_\mathfrak{p} |z|_\mathfrak{p}^{\tilde r_N-1} > \delta^{-1}|\alpha|_\mathfrak{p}^{-1}
\quad \hbox{and} \quad
\tilde r_{n+N}-\tilde r_{n+N-1} > \frac{K}{\eta \log_p |z|_\mathfrak{p}}, \ \forall n\geq N.\]
Thus $q^{\lambda_1}>\delta^{-1}$, $q^{\lambda{n+1}-\lambda_n}\geq \epsilon_n^{-1}$ and $\epsilon_n \rightarrow 0$.
By Lemma \ref{thm-main-ebp} and \eqref{eq:prop7.1(1)}-\eqref{eq:prop7.1(3)}, the set
\[E=\{x\in D(z,\delta): |[f_n(x)]-b_n|_\mathfrak{p}\leq \epsilon_n,\; \forall n>0\}\]
is a $q$-homogeneous set with
\begin{align*}
&\dim_{\mathcal{H}}E=\liminf_{n\rightarrow\infty} \frac{\lambda_n+\sum_{k=1}^{n-1} \log_q \epsilon_k}{\lambda_n-\log_q \epsilon_n} \\
&\geq \liminf_{n\rightarrow\infty} \frac{\log_q |\alpha|_\mathfrak{p}|\tilde r_{n+N-1}|_\mathfrak{p}+\bigl((\tilde r_{n+N-1}-1)-(1-\eta)\displaystyle\sum_{k=1}^{n-1} (\tilde r_{k+N}-\tilde r_{k+N-1})\bigr)\log_q |z|_\mathfrak{p}}{\log_q |\alpha|_\mathfrak{p}|\tilde r_{n+N-1}|_\mathfrak{p}+\bigl((\tilde r_{n+N-1}-1)+(1-\eta)(\tilde r_{n+N}-\tilde r_{n+N-1})\bigr)\log_q |z|_\mathfrak{p}} \\
&= \liminf_{n\rightarrow\infty} \frac{\log_q |\alpha|_\mathfrak{p}|\tilde r_{n+N-1}|_\mathfrak{p}+\bigl((\tilde r_{n+N-1}-1)-(1-\eta)(\tilde r_{n+N-1}-\tilde r_{N})\bigr)\log_q |z|_\mathfrak{p}}{\log_q |\alpha|_\mathfrak{p}|\tilde r_{n+N-1}|_\mathfrak{p}+\bigl((\tilde r_{n+N-1}-1)+(1-\eta)(\tilde r_{n+N}-\tilde r_{n+N-1})\bigr)\log_q |z|_\mathfrak{p}}\\
&= \liminf_{n\rightarrow\infty} \frac{(\tilde r_{n+N-1}-1)-(1-\eta)(\tilde r_{n+N-1}-\tilde r_{N})}{(\tilde r_{n+N-1}-1)+(1-\eta)(\tilde r_{n+N}-\tilde r_{n+N-1})}\\
&= \liminf_{n\rightarrow\infty} \frac{\eta\tilde r_{n+N-1}}{(\eta\tilde r_{n+N-1}+(1-\eta)\tilde r_{n+N})}\\
&\geq \liminf_{n\rightarrow\infty} \frac{\eta\tilde r_{n+N-1}}{(\eta+(1-\eta)(1+\epsilon))\tilde r_{n+N-1}} \\
&= \frac{\eta}{1+\epsilon-\eta\epsilon}.
\end{align*}

Let $\eta\rightarrow1$, since $E\subset E(\tilde r_n,\tilde b_n)$, $\dim_{\mathcal{H}} E(\tilde r_n,\tilde b_n)=1$. Hence $\dim_{\mathcal{H}} E(r_n,b_n) =1$.
\end{proof}

\begin{proposition}\label{prop-other2}
Assume ${\rm char}\mathcal{F}=0$. Fix $0\neq \alpha \in \mathcal{F}$.
Let $|z|_\mathfrak{p}>1$ and $0<\delta<|z|_\mathfrak{p}$. Let $\tau>1$. For any sequence $(b_n)$ in $\mathcal{O}$, take
\[E:=\{x\in D(z,\delta): |[\alpha x^n]-b_n|_\mathfrak{p}\leq \tau^{-n} \hbox{ for infinitely many }n\},\]
then
\[\dim_{\mathcal{H}}E=\frac{\log_q |z|_\mathfrak{p}}{\log_q (\tau |z|_\mathfrak{p})}.\]
\end{proposition}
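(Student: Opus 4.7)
The plan is to establish matching upper and lower bounds on $\dim_{\mathcal{H}}E$. First I reduce to working inside a small disk $\Omega=D(z,\delta')\subseteq D(z,\delta)$ with $\delta'\le |z|_\mathfrak{p}/q^{e+1}$, so that by Lemma~\ref{lem-scaling}(a) every $f_n(x)=\alpha x^n$ is scaling on $\Omega$ with ratio $q^{\lambda_n}$, where $\lambda_n=\log_q|\alpha|_\mathfrak{p}+\log_q|n|_\mathfrak{p}+(n-1)\log_q|z|_\mathfrak{p}$. Since $D(z,\delta)$ is covered by countably many such $\Omega$, it suffices to compute $\dim_{\mathcal{H}}(E\cap\Omega)$. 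Writing $H_n=\lceil n\log_q\tau\rceil$, the defining condition becomes $|[\alpha x^n]-b_n|_\mathfrak{p}\le q^{-H_n}$.

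For the upper bound I would use the limsup structure $E\cap\Omega\subseteq\bigcap_{N\ge 1}\bigcup_{n\ge N}A_n$ with $A_n=\{x\in\Omega:|[\alpha x^n]-b_n|_\mathfrak{p}\le q^{-H_n}\}$. By the same scaling mechanism used in the proof of Lemma~\ref{thm-main-ebp}, $A_n$ is a disjoint union of $q^{\lambda_n}{\rm diam}(\Omega)$ disks of diameter $q^{-\lambda_n-H_n}$ (once $n$ is large enough that $\lambda_n\ge -\log_q{\rm diam}(\Omega)$). Its contribution to the $s$-covering sum is of order $q^{\lambda_n(1-s)-sH_n}$, and substituting $\lambda_n=n\log_q|z|_\mathfrak{p}+O(\log n)$, $H_n=n\log_q\tau+O(1)$ shows that
\[
\sum_{n\ge N}q^{\lambda_n(1-s)-sH_n}\longrightarrow 0\quad\text{as }N\to\infty
\]
whenever $(1-s)\log_q|z|_\mathfrak{p}<s\log_q\tau$, i.e.\ whenever $s>\log_q|z|_\mathfrak{p}/\log_q(\tau|z|_\mathfrak{p})$. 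Hence $\mathcal{H}^s(E\cap\Omega)=0$ for every such $s$.

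For the lower bound I would apply Lemma~\ref{thm-main-ebp} to a super-lacunary subsequence. Choosing $n_k=2^{2^k}$ guarantees both $n_{k+1}/n_k\to\infty$ and $\sum_{j<k}n_j=o(n_k)$; for $k$ large the linear gain $(n_{k+1}-n_k)\log_q|z|_\mathfrak{p}$ dominates the $O(\log n)$ error coming from $|n|_\mathfrak{p}$ as well as $n_k\log_q\tau$, so
\[
\lambda_{n_{k+1}}-\lambda_{n_k}\ge H_{n_k},
\]
and after dropping finitely many initial indices one also has $\lambda_{n_1}\ge -\log_q\delta'$. The set
\[
E':=\bigl\{x\in\Omega:|[\alpha x^{n_k}]-b_{n_k}|_\mathfrak{p}\le q^{-H_{n_k}}\ \text{for all }k\ge 1\bigr\}
\]
is a $q$-homogeneous subset of $E\cap\Omega$, and formula \eqref{dim-of-hom} gives
\[
\dim_{\mathcal{H}}E'=\liminf_{k\to\infty}\frac{\lambda_{n_k}-\sum_{j<k}H_{n_j}}{\lambda_{n_k}+H_{n_k}}=\frac{\log_q|z|_\mathfrak{p}}{\log_q(\tau|z|_\mathfrak{p})},
\]
because the subtracted sum is of order $n_{k-1}\log_q\tau=o(n_k\log_q|z|_\mathfrak{p})$ and therefore negligible against the leading term of $\lambda_{n_k}$.

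The main obstacle I expect is the bookkeeping that reconciles both bounds. The $O(\log n)$ term $\log_q|n|_\mathfrak{p}$ coming from ramification must be controlled uniformly so that it neither violates the scaling-gap hypothesis $\lambda_{n_{k+1}}-\lambda_{n_k}\ge H_{n_k}$ of Lemma~\ref{thm-main-ebp} nor shifts the asymptotic constants. Once these lower-order contributions are absorbed and the finitely many small indices (where $\lambda_n<-\log_q{\rm diam}(\Omega)$) are discarded, the upper and lower bounds meet at $\log_q|z|_\mathfrak{p}/\log_q(\tau|z|_\mathfrak{p})$, and taking the supremum over the countable cover of $D(z,\delta)$ yields the theorem.
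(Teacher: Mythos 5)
Your proposal is correct and takes essentially the same route as the paper: the upper bound is the same limsup covering argument over the preimage disks $A_n$, and the lower bound applies Lemma~\ref{thm-main-ebp} to a sparse subsequence and computes the dimension via formula \eqref{dim-of-hom}. The only cosmetic difference is that you fix the concrete super-lacunary sequence $n_k=2^{2^k}$ and absorb the $O(\log n)$ contribution from $|n|_\mathfrak{p}$ into the error, whereas the paper instead imposes $p\nmid n_k$ so that $|n_k|_\mathfrak{p}=1$ exactly; both versions verify the hypotheses of Lemma~\ref{thm-main-ebp} and yield the same limit.
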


\begin{proof}
Without loss of generality we may assume that $\delta$ is an integer power of $q$ and $\delta\leq |z|_\mathfrak{p}/q^{e+1}$. Note that for any positive integer $n$ and $x\in \mathcal{F}$,
\[|x|_\mathfrak{p}\leq \tau^{-n} \Leftrightarrow |x|_\mathfrak{p}\leq q^{-\lceil n\log_q \tau \rceil}.\]

Lower bound: 
Choose a subsequence $(n_k)$ of $\N$ satisfying that
\begin{equation}\label{eq:prop7.2(1)}
	p\nmid n_k,\ |z|_\mathfrak{p}^{n_1-1}\geq \delta^{-1}|\alpha|_\mathfrak{p}^{-1},\ |z|_\mathfrak{p}^{n_{k+1}-n_k}\geq \tau^{n_k} \hbox{ and } \frac{n_1+n_2+\cdots+n_k}{n_{k+1}}\rightarrow0.
\end{equation}
Let
\[E':=\{x\in D(z,\delta):|[\alpha x^{n_k}]-b_{n_k}|_\mathfrak{p}\leq q^{-\lceil n_k\log_q \tau \rceil},\; \forall k\}\subset E.\]
We only need to prove $\dim_{\mathcal{H}}E'\geq \frac{\log_q |z|_\mathfrak{p}}{\log_q (\tau |z|_\mathfrak{p})}$.
By Lemma \ref{lem-scaling} and \eqref{eq:prop7.2(1)}, the map
\[f_k:D(z,\delta)\rightarrow\mathcal{F} \quad f_k(x)=\alpha x^{n_k}\]
is a scaling map of scaling ratio $|\alpha|_\mathfrak{p}|z|_\mathfrak{p}^{n_k-1}$. By Lemma \ref{thm-main-ebp} and \eqref{eq:prop7.2(1)},
\begin{align*}
	\dim_{\mathcal{H}}E'
	&= \liminf_{k\rightarrow\infty} \frac{\log_q |\alpha|_\mathfrak{p}+(n_k-1)\log_q |z|_\mathfrak{p}-\left(\lceil n_1\log_q \tau \rceil+\cdots+\lceil n_{k-1}\log_q \tau \rceil \right)}{\log_q |\alpha|_\mathfrak{p}+(n_k-1)\log_\mathfrak{p} |z|_\mathfrak{p}+\lceil n_k\log_q \tau \rceil}\\
	&\geq \liminf_{k\rightarrow\infty} \frac{\log_q |\alpha|_\mathfrak{p}+(n_k-1)\log_q |z|_\mathfrak{p}-\left(n_1+\cdots+n_{k-1}\right)\log_q \tau-(k-1)}{\log_q |\alpha|_\mathfrak{p}+(n_k-1)\log_q |z|_\mathfrak{p}+n_k\log_q \tau+1}\\
	&= \frac{\log_q |z|_\mathfrak{p}}{\log_q (\tau|z|_\mathfrak{p})}.
\end{align*}

Upper bound:
Take $f_n(x)=\alpha x^n$, then $f_n$ is a scaling map on $D(z,\delta)$ of scaling ratio
\[q^{\lambda_n}= |\alpha|_\mathfrak{p}|n|_\mathfrak{p}|z|_\mathfrak{p}^{n-1}.\]
Choose an $n_0$ such that $\forall n>n_0$, $|n|_\mathfrak{p}|z|_\mathfrak{p}^{n-1}>\delta^{-1}|\alpha|_\mathfrak{p}^{-1}$.
For any fixed $n>n_0$, decompose the disk $D(z,\delta)$ into the union of disks $D_j$ of radius $q^{-\lambda_n}$ with $1\leq j\leq \delta q^{\lambda_n}$, so that the restriction $\tilde{f}_{n,j}:=\tilde{f}_{n}:D_j \rightarrow \mathcal{O}$ is bijective.
Then the disks
\[D_{n,j}':=\tilde{f}_{n,j}^{-1}\left(D\left(b_n,p^{-\lceil n\log_q \tau \rceil}\right)\right)\]
are of radius $q^{-\lceil n\log_q \tau \rceil-\lambda_n}$ for $j=1,\ldots,\delta q^{\lambda_n}$.
By the definition of $E$ and $D_{n,j}'$, we have
\[E\subset \bigcup_{n>n_0} \left( \bigcup_{1\leq j\leq \delta q^{\lambda_n}} D_{n,j}'\right).\]

If $\frac{\log_q |z|_\mathfrak{p}}{\log_q (\tau|z|_\mathfrak{p})}<s<1$, then
\begin{align*}
	\sum_{\substack{n>n_0 \\ 1\leq j\leq \delta q^{\lambda_n}}} ({\rm diam}D_{n,j}')^s
	&= \sum_{n>n_0} \delta q^{\lambda_n}\left(q^{-\lceil n\log_q \tau \rceil-\lambda_n}\right)^s\\
	&= \sum_{n>n_0} \delta q^{(1-s)\lambda_n}(p^{-\lceil n\log_p \tau \rceil})^s\\
	&\leq \delta |\alpha|_\mathfrak{p}^{1-s} \sum_{n>n_0} \left(|z|_\mathfrak{p}^{n-1}\right)^{1-s}(\tau^{-n})^s<\infty.
\end{align*}
Since $\sup_{n>n_0}{\rm diam}D_{n,j}'\rightarrow0$ as $n_0\rightarrow\infty$, we have $\dim_{\mathcal{H}}E\leq s$ by the definition of Hausdorff dimension. Thus $\dim_{\mathcal{H}}E\leq \frac{\log_q |z|_\mathfrak{p}}{\log_q (\tau|z|_\mathfrak{p})}$.
\end{proof}
\medskip

\bibliographystyle{siam}
\bibliography{p-adic-koksma}

\end{document}